\newtheorem{theorem}{Theorem}
\newtheorem{lemma}[theorem]{Lemma}
\newtheorem{proposition}[theorem]{Proposition}
\newtheorem{corollary}[theorem]{Corollary}
\newtheorem{example}[theorem]{Example}
\theoremstyle{definition}
\newtheorem{definition}{Definition}[section]
\def\de{\delta}
\def\De{\Delta}
\def\eps{\varepsilon}
\def\ga{\gamma}
\def\Ga{\Gamma}
\def\om{\omega}
\def\Om{\Omega}
\def\Si{\Sigma}
\def\Sm{\setminus}
\newcommand{\bd}[1]{{\partial{#1}}}
\newcommand{\C}{{\mathbb C}}
\newcommand{\Chat}{{\widehat{\mathbb C}}}
\newcommand{\Cpct}{{c}}
\renewcommand\d{\mathrm{d}}
\newcommand{\Dbar}{{\overline{\mathbb D}}}
\newcommand{\D}{{\mathbb D}}
\newcommand*\Laplace{\mathop{}\!\mathbin\bigtriangleup}
\newcommand*\napla{{\mathop{}\!\mathbin\bigtriangledown}}
\renewcommand\Sm{{\setminus}}
\DeclareMathOperator\Arg{Arg}
\DeclareMathOperator\e{e}
\DeclareMathOperator\Pc{{Pc}}
\DeclareMathOperator\sing{{sing}}
\newcommand{\ov}[1]{{\overline{#1}}}
\newcommand\whK{{\widehat{K}}}
\newcommand\whD{{\widehat{D}}}
\newcommand\whDe{{\widehat{\Delta}}}
\newcommand\whg{{\widehat{g}}}
\newcommand\oom{{\overline{\omega}}}
\newcommand\oo{{o}}
\newcommand\Ray{{\mathcal{R}}}
\newcommand{\Sen}{{{\mathbb S}}}
\newcommand{\R}{{\mathbb R}}
\newcommand{\T}{{\mathbb T}}
\newcommand\whh{{\widehat{h}}}
\newcommand\whom{{\widehat{\omega}}}
\newcommand\whOm{{\widehat{\Omega}}}
\newcommand\whphi{{\widehat{\pi}}}
\newcommand\whtau{{\widehat{\tau}}}
\newcommand\whR{{\widehat{R}}}
\newcommand\whW{{\widehat{W}}}
\newcommand\whXi{{\widehat{\Xi}}}
\newcommand\whY{{\widehat{Y}}}
\newcommand\mapfromto[3]{{\ensuremath{#1: #2 \to #3}}}
\newcommand{\ALIGN}{\begin{align*}}
\newcommand{\ENDALIGN}{\end{align*}}
\newcommand{\ENUM}{\begin{enumerate}}
\newcommand{\ENUMa}{\begin{enumerate}[a.]}
\newcommand{\ENUMA}{\begin{enumerate}[A.]}
\newcommand{\ENUMAF}{\begin{enumerate}[\bf A.]}
\newcommand{\ENUMi}{\begin{enumerate}[i)]}
\newcommand{\ENDENUM}{\end{enumerate}}
\newcommand{\ITMZ}{\begin{itemize}}
\newcommand{\ENDITMZ}{\end{itemize}}
\newcommand{\REFEQN}[1] { \begin{equation}\label{#1} }
\newcommand{\ENDEQN}{\end{equation}}
\newcommand{\THM}{\begin{theorem}}
\newcommand{\EXA}{ \begin{example}}
\newcommand{\REFEXA}[1] { \begin{example}\label{#1} }
\newcommand{\ENDEXA}{\end{example}}
\newcommand{\MTX}{ \begin{matrix}}
\newcommand{\ENDMTX}{ \end{matrix}}
\newcommand{\REM}{ \begin{remark}}
\newcommand{\ENDREM}{\end{remark}}
\newcommand{\REFTHM}[1] { \begin{theorem}\label{#1} }
\newcommand{\RREFTHM}[2] { \begin{theorem}[#1]\label{#2} }
\newcommand{\ENDTHM}{\end{theorem}}
\newcommand{\REFNTH}[1] { \begin{newthm}\label{#1} }
\newcommand{\ENDNTH}{\end{newthm}}
\newcommand{\REFPROP}[1]{\begin{proposition}\label{#1} }
\newcommand{\RREFPROP}[2]{\begin{proposition}[#1]\label{#2} }
\newcommand{\PROP}{\begin{proposition}}
\newcommand{\ENDPROP}{\end{proposition} }
\newcommand{\REFDEF}[1]{\begin{definition}\label{#1} }
\newcommand{\RREFDEF}[2]{\begin{definition}[#1]\label{#2} }
\newcommand{\RDEF}[1]{\begin{definition}[#1]}
\newcommand{\DEF}{\begin{definition}}
\newcommand{\ENDDEF}{\end{definition} }
\newcommand{\REFLEM}[1]{\begin{lemma}\label{#1} }
\newcommand{\RREFLEM}[2]{\begin{lemma}[#1]\label{#2} }
\newcommand{\LEM}{\begin{lemma}}
\newcommand{\ENDLEM}{\end{lemma} }
\newcommand{\REFCOR}[1]{\begin{corollary}\label{#1} }
\newcommand{\COR}{\begin{corollary}}
\newcommand{\ENDCOR}{\end{corollary} }
\newcommand{\CONJ}{\begin{conjecture}}
\newcommand{\REFCONJ}[1]{\begin{conjecture}\label{#1}}
\newcommand{\RREFCONJ}[2]{\begin{conjecture}{#1}\label{#2}}
\newcommand{\ENDCONJ}{\end{conjecture} }
\newcommand{\REFDEFTHM}[1] { \begin{defthm}\label{#1} }
\newcommand{\ENDDEFTHM}{\end{defthm}}
\newcommand{\defref}[1]{Definition~\ref{#1}}
\newcommand{\lemref}[1]{Lemma~\ref{#1}}
\newcommand{\thmref}[1]{Theorem~\ref{#1}}
\newcommand{\propref}[1]{Proposition~\ref{#1}}
\newcommand{\secref}[1]{Section~\ref{#1}}
\newcommand{\PROOF}{\begin{proof}}
\newcommand{\ENDPROOF}{\end{proof}}
\renewcommand\Im{\mathrm{Im}}
\title{Conformal renormalization of compact sets}
\author{Carsten Lunde Petersen and Filip Samuelsen}
\date\today
\begin{document}
\begin{abstract}
This paper develops a conformal renormalization scheme for compact sets $K \subset \C$. 
As one application of the conformal renormalization scheme we prove that for every isolated non-trivial connected component 
$E \subset K$ there exists a conformal homeomorphism $\phi$ mapping a neighbourhood of $E$ into $\C$ 
such that the equilibrium measure on $K$ restricted to $E$ equals the scaled push-forward by $\phi^{-1}$ of 
the equilibrium measure on $\phi(E)$. 

Moreover the proof shows that the condition of connectedness of $E$ can be relaxed considerably. We also introduce an inverse to the procedure of conformal renormalization, which allows one to reconstruct $K$ from its conformal renormalizations.
\end{abstract}

\maketitle

\section{Introduction}
Equilibrium measures on non-polar compact sets in $\C$ are a corner stone of potential theory in the complex plane. 
For a thorough introduction to this field, see e.g. the monograph by Randsford, \cite{ransford}. 

The equilibrium measure $\om=\om_K$ on a given non-polar compact set $K\subseteq \C$ 
is directly related to the capacity of $K$ 
and the Green's function (with pole at infinity) for the unbounded connected component 
$D$ of $\Chat\Sm K$, usually denoted by $g_D$. 
Moreover, if $K$ is connected both are directly related to the uniformisation of $D$.

Indeed the Green's function $g=p_\om-I$, where $p_\om$ is the potential for $\om$ and $I$ is its energy. 
The capacity of $K$ is $\e^I$, see also the Notation and Preliminaries section below for further details. 
Conversely the equillibrium measure $\om$ is $\frac{1}{2\pi}$ times the Laplacian derivative of $g$ in the sense of distributions.

In the special case where $K$ is a non-trivial (i.e. not at singleton) connected compact subset $K\subset\C$ 
the Riemann uniformization theorem provides a (unique) isomorphism, 
a biholomorphic map {\mapfromto {\psi = \psi_D} D {\Chat\Sm\Dbar}} with $\psi(z)/z\to a>0$ as $z\to\infty$. 
The Green's function $g=g_D = \log|\psi|$ is harmonic on $D\Sm\{\infty\}$ 
and extends to $\C$ as a subharmonic function by $g(z) = 0$ on $K$. 
The map $\psi$ directly gives the capacity $\Cpct(K)=1/a$. 
The equilibrium measure $\om=\om_K$ with support $\bd{D}\subseteq\bd{K}$ is the push-forward of the 
normalized standard Lebesgue measure on $\Sen=\bd{\D}$ under the radial limit of $\psi^{-1}$. 

In the case where $K$ is not connected, but say has at least two non-trivial connected components $K_1$ and $K_2$ each intersecting $\bd{D}$ the Green's function still equals $\log|\psi|$ near $\infty$ 
for some univalent map $\psi = \psi_D$ with $\psi(z)/z\to a = \frac{1}{\Cpct(K)}$ 
as $z\to\infty$, but $\psi$ is not a uniformization of $D$ (see also \lemref{existence_and_uniqueness_of_psi}). 
One may ask if there is a relation between the restriction of the equilibrium measure $\om=\om_K$ on $K$ and 
the equilibrium measure $\om_j$ on $K_j$, for $j=1,2$. 
A few examples show that there is not a tangible general such relation. 
One may then ask the somewhat more basic question: 

Are there compact sets $\whK_1, \whK_2\subseteq \C$ and homeomorphisms {\mapfromto {\varphi_j}{\whK_j}{K_j}}, for $j=1,2$ 
such that the restriction of $\om$ to $K_j$ equals the scaled push-forward by $\varphi$ of the equilibrium measure $\whom_j$ on $\whK_j$, i.e.
$\om|_{K_j} = \om(K_j)\varphi_*(\whom_j)$, for $j=1,2$?

In this paper we develop a notion of conformal renormalization of compact sets in the complex plane. 
As a consequence, we show that the answer to the above question is indeed yes, one may even choose the homeomorphisms 
$\varphi_j$ above to be conformal on a neighbourhood of $K_j$. 

\THM \label{thm_1}
Let $K\subset\C$ be any non-thin compact subset with $D$, $g$ and $\om$ as above.  
Let $l>0$, let $W_1, \ldots, W_n$ be the bounded connected components of $\C\Sm g^{-1}(l)$ 
and let $K_j:=K\cap W_j$ for $j= 1, \ldots, n$. 
Then there exist univalent maps {\mapfromto {\phi_j} {W_j} \C} such that $ \whK_j := \phi(K_j)$ 
are non-thin compact sets with corresponding Green's functions 
\REFEQN{normalized_greensfunction}
g_j = \frac{1}{\om(K_j)}g \circ \phi_j^{-1}. 
\ENDEQN
Moreover $\phi_j$ is unique up to post-composition by an affine map and thus $\whK_j$ is unique up to affine transformation.
\ENDTHM
\COR Let $\whom_j$ denote the equilibriums measure for $\whK_j$ then 
\REFEQN{normalized_pushforward}
\whom_j = \frac{1}{\om(K_j)}\phi_*(\om|_{K_j}). 
\ENDEQN
\ENDCOR

We shall say that a compact set $K$ is full if its complement is connected. 
Clearly for a non-thin and compact set $K\subset\C$ the unbounded connected component $D$ of $\Chat\Sm K$ 
is simply connected if and only if the Green's function for $D$ has no positive critical value (see also \lemref{existence_and_uniqueness_of_psi}). 
We shall say that $K$ is centered if $\psi_D(z) = \frac{1}{\Cpct(K)} z+ O(1/z)$.
The conformal renormalizations of $K$, which are to be defined below, do not depend on size and positon, i.e. are 
preserved under affine maps. The capacity $\Cpct(K)$ fixes $K$ in the affine class of $K$ up to rotations and translations, and adding centeredness furthermore fixes $K$ up to rotation. 

Let $K\subset\C$ be a non-thin compact subset with $D$ multiply connected. 
Here in the Introduction we shall assume the maximal critical value $L$ of the Green's function $g$ for $D$ is simple, 
i.e. there is a unique simple critical point $z_0$ with $g(z_0) = L$. 
The reader will find the more general cases unfolded in  \secref{sec:generalisations} below.

Let $\Ray_t$ denote the smooth external ray for $K$ of argument $t\in\T$ (See definition \ref{smooth_rays}), and let $t_1< t_2 < 1+t_1$ be such that $z_0 \in \mathbb{C}$ is a limit point of $\Ray_{t_1}$ and $\Ray_{t_2}$. Furtheremore let $\Om_1, \Om_2$ be the two connected components of $\C\Sm(\Ray_{t_1}\cup\{z_0\}\cup\Ray_{t_2})$ with $\Om_1$ containing the external rays in the interval $]t_1, t_2[$ 
and $\Om_2$ containing the external rays in the interval $]t_2, 1+t_1[$.
Let $K_j := K\cap \Om_j$, then $K = K_1\cup K_2$ because $K = g^{-1}(0)$ by non-thinness. 
Let $\De_1 := t_2-t_1$ and $\De_2 := 1+t_1-t_2$ be the interior opening angles at $\infty$ of $\Om_1$ and $\Om_2$ respectively. 

\THM \label{theorem_2}
For $j\in{1,2}$, there exist unique univalent maps (uniformizations) {\mapfromto {\phi_j}{\Om_j}\C} such that 
\ENUM
\item
$\whK_j:= \phi_j(K_j)$ are centered and have capacity $1$, 
\item
the Green's functions $\whg_j$ for $\whD_j := \Chat\Sm\whK_j$ 
satisfy $\whg_j(z) = g(\phi_j^{-1}(z))/\De_j$,
\item
$\whom_j = (\phi_j)_*(\om_{|K_j})/\De_j$, 
where $\whom_j$ denote the equilibrium measures for $\whK_j$, 
\item
and finally, the continuous extension of $\phi_j$ to $\ov{\Om}_j$ sends the arc $\Ray_{t_1}\cup\{z_0\}\cup\Ray_{t_2}$ 
into the smooth external ray $\Ray_0(\whK_j)$.
\ENDENUM
\ENDTHM
Note that taking $t_1'=t_2$ and $t_2'=1+t_1$ exchanges all labels $1$ and $2$.
We call $\whK_1$ and $\whK_2$ conformal renormalizations of $K$.
There are many different compact sets $K'$ with the same renormalizations as a given set $K$. 
However, given the numbers $\Cpct(K), L, t_1, \De_1$ and non-thin and centered compact sets $\whK_1, \whK_2$ of capacity $1$, 
we can reconstruct $K$ up to translation and in particular reconstruct $K$ if $K$ is centered. Indeed, we prove the following converse theorem.
\THM \label{inv_ren_thm}
Given $t_1\in\T$ and $C,L, \De_1 > 0$ with $\De_1 < 1$ and non-thin centered compact sets $K_1, K_2$ of capacity $1$ with corresponding Green's functions $g_1$ and $g_2$ satisfying 
$L > \sing(g_1) \De_1$ and $L > \sing(g_2) (1-\De_1)$. Then  
there exists a unique non-thin and centered compact set $K$ of capacity $C$ such that 
the Green's function $g$ for $D$ has a unique simple critical point $z_0$ of maximal potential $L$ with external arguments 
$t_1$ and $t_2= t_1+\De_1$ and such that the conformal renormalizations of $K$ are $K_1$ and $K_2$. 
\ENDTHM

\section{Notation and preliminaries} \label{sec:not_and_pre}
We need the following notions and results from potential theory.
\begin{definition}[Potential and energy] 
	Let $\mu$ be a Borel probability measure with compact support $S(\mu): = \mathrm{supp}\,\mu\subset\C$. 
	We follow the sign-convention of Randsford, \cite{ransford} and define the \textit{potential} function $p_\mu : \mathbb{C} \to [-\infty, \infty)$ 
	of $\mu$ by 
	\[
		p_\mu \left(z\right) := \int_\mathbb{C} \log |z-w| \mathrm{d} \mu\left(w\right)
	.\] 
	And the \textit{energy} $I$ of $\mu$, as 
	\[
		I\left( \mu \right) := \int_\mathbb{C} p_\mu\left(z\right) \mathrm{d} \mu\left(z\right)  = 
		\iint\limits_{\mathbb{C} \times \mathbb{C}} \log \left| z-w \right| 
		\mathrm{d}\mu\left(w\right) \mathrm{d}\mu\left(z\right)  \in [-\infty,\infty[
	.\]  
\end{definition}
\RDEF{Capacity}
	For $K$ a bounded subset of $\C$ the \textit{(logarithmic) capacity} of $K$ is defined by 
	\begin{align}
	\Cpct\left(K\right) := \sup_\mu e^{I\left(\mu\right)} \in [0,\infty[ \label{eq:capacity_definition} 	
	.\end{align} 
	where the supremum is taken over all Borel probability measures $\mu$ with compact support $S(\mu) \subseteq K$.
\ENDDEF
The capacity is a set function which satisfies the following elementary properties
\begin{itemize}
	\item $E \subseteq K \implies c\left(E\right) \leq c\left(K\right)$.
	\item $c(\alpha K + \beta) = |\alpha| \cdot c(K)$ for every $\alpha, \beta \in \mathbb{C}$.
\end{itemize}
A subset $E$ of $\mathbb{C}$ is called polar if $I\left(\mu\right)=-\infty$ for every Borel probability measure $\mu$ with 
compact support $S(\mu)\subseteq E$, or equivalently if $c(E)=0$.
We say that a property holds nearly everywhere (n.e.) on a subset $E$ of $\mathbb{C}$ if it holds everywhere on $E \setminus K$ for some Borel polar set $K$.

The polar sets play the role of negligible sets in potential theory, much like sets of measure zero in measure theory, and every Borel polar set has Lebesgue measure zero.

The complement mapping in $\Chat$ defines a natural \emph{duality} 
between the set of compact subsets of $\C$ and the set of domains in $\Chat$ containing $\infty$. 
That is for $K\subset\C$ a compact subset we define $K^*$ as the connected component $D$ 
of $\Chat\Sm K$ containing $\infty$. And for $D\subset\Chat$ with $\infty\in D$ we denote by 
$D^*$ the compact subset $K = \Chat\Sm D \subset\C$. 
The double dual $K^{**}$ of a compact set $K$ equals the polynomial convex hull $\Pc(K)$ of $K$, 
i.e. $K$ with holes filled.  
Recall from the introduction that when $K= K^{**} =\Pc(K)$ we say that $K$ is full.
The double dual $D^{**}$ of a domain $D\subset\Chat$ containing $\infty$ equals $D$. 
It follows that the duality defines a bijection between the set of full compact subsets of $\C$ and 
the set of subdomains of $\Chat$ containing $\infty$. 

For $K\subseteq \C$ a compact set with $\Cpct(K)> 0$ the supremum in \eqref{eq:capacity_definition} is attained 
at a unique Borel probability measure $\om_K$, called the equilibrium measure of $K$. 
The outer boundary of $K$, i.e. the common boundary of $D=K^*$ and $K^{**} = \Pc(K)$ 
equals the support of $\om_K$. 
Thus the capacity of $K$ equals the capacity of the outer boundary of $K$, 
$\Cpct\left(K\right) = \Cpct\left(\partial\Pc(K)\right)$. 

\begin{definition}[Green's function with pole at $\infty$] \label{definition_greens_function}
Let $D\subsetneq \Chat$ be a proper subdomain containing $\infty$ and with non-polar complement $K = D^*$. 
The \emph{Green's function} for $D$ (with pole at $\infty$) is the unique mapping $g=g_D: \Chat \to [0,\infty]$, such that 
\begin{enumerate}
	\item $g$ is subharmonic on $\C$,
	\item $g$ is harmonic on  $D \setminus \{ \infty \}$, 
	\item $g_D\left( \infty \right) = \infty$,  \[
	g_D\left(z\right) = \log |z| + O\left(1\right) \; \mathrm{as} \; z \to \infty .\] 
	\item $g$ is bounded outside each neighbourhood of $\infty$ and equals $0$ on $\Chat\Sm\ov{D}$.
	\item $g_D\left(z\right) \to 0$ as $z \to \zeta$, for nearly every  $\zeta \in \partial D$.
\end{enumerate}
\end{definition}

In view of the duality we shall often abuse notation and also call $g$ the Green's function of $K$.

The existence of a Green's function for any domain $D\subset\Chat$, 
which contains $\infty$ and which has non-polar complement $K = D^*$, 
is an immediate consequence of Frostmans theorem \cite[Thm.~3.3.4]{ransford}. 
Indeed the function 
	\[
	g(z) = \begin{cases} p_{\om_K}\left(z\right) - I\left(\om_K\right)   &  \mathrm{for}  \; z \in \C \\
		\infty & \mathrm{for} \;  z=\infty
		\end{cases}
	\] 
satisfies the requirements, where $\om_K$ denotes the equilibrium measure for $K$. The uniqueness of such functions is an elementary consequence of the extended maximum principle for harmonic functions. 

\begin{theorem}\cite[4.2.4]{ransford}
Let $D$ with $\infty\in D\subsetneq\Chat$ be a proper subdomain with dual full compact set $K=\Chat\Sm D$ and let $\zeta\in\partial D = \partial K$. 
Then the following are equivalent:
\ITMZ
\item
$K$ is non-polar and $ \lim_{z \to \zeta} g_D(z) = 0$,
\item
the point $\zeta$ is a Dirichlet regular boundary point of $D$,
\item
$K$ is non-thin at $\zeta$.
\ENDITMZ
\end{theorem}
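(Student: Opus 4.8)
For this proof sketch I number the three listed conditions (i), (ii), (iii) in the order stated. The plan is to reduce to the case where $K$ is non-polar and then to prove (iii)$\Rightarrow$(i), (i)$\Leftrightarrow$(ii) and (ii)$\Rightarrow$(iii), using the representation $g=g_D=p_{\om_K}-I(\om_K)$ from Frostman's theorem together with three standard facts of potential theory: a subharmonic function is finely continuous; a boundary point is Dirichlet regular if and only if it admits a barrier, and regularity is a local property of the point; and the extended maximum principle holds for subharmonic functions bounded above, polar boundary sets being negligible. If $K$ is polar, all three assertions are false: $K$ is then thin at every point of $\C$, it has no Green's function, and a domain with polar complement has no regular boundary point. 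So assume $K$ non-polar. Then $g=g_D$ exists; it is subharmonic on $\C$ with $g\ge 0$; by the minimum principle (since $g(\infty)=\infty$) it is harmonic and strictly positive on $D\Sm\{\infty\}$; it vanishes on the interior of $K=\Pc(K)$ by condition (4) of \defref{definition_greens_function}; and, as $g(\zeta)=\limsup_{z\to\zeta}g(z)$ for subharmonic $g$, condition (5) of \defref{definition_greens_function} forces $g$ to vanish at nearly every point of $\bd K=\bd D$. In particular $P:=\{z\in K:g(z)>0\}$ is a polar subset of $\bd K$.

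For (iii)$\Rightarrow$(i): a polar set is thin everywhere, so $K$ non-thin at $\zeta$ is automatically non-polar. Since $P$ is polar it is thin at $\zeta$, and a finite union of sets thin at $\zeta$ is thin at $\zeta$, so $K\Sm P$ is still non-thin at $\zeta$, while $g\equiv 0$ on $K\Sm P$. Because $g$ is finely continuous at $\zeta$, if $g(\zeta)$ were positive the set $\{z:g(z)\le g(\zeta)/2\}$ — which contains $K\Sm P$ — would be thin at $\zeta$, a contradiction; hence $g(\zeta)=0$. Upper semicontinuity gives $\limsup_{z\to\zeta}g(z)\le g(\zeta)=0$, and $g\ge 0$ gives the reverse bound, so $g(z)\to 0$ as $z\to\zeta$; together with non-polarity, this is (i).

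For (i)$\Leftrightarrow$(ii): fix $r>0$ with $\infty\notin\ov{B(\zeta,r)}$ and put $V:=D\cap B(\zeta,r)$, so $g|_V$ is positive and harmonic. If $g(z)\to 0$ as $z\to\zeta$, then $g|_V$ is a barrier at $\zeta$, so $\zeta$ is regular for $V$ and hence, by locality, for $D$. Conversely, suppose $\zeta$ is regular. Let $b$ be a barrier for $V$ at $\zeta$, reinforced (as is standard) so that also $b\ge 1$ on $V\Sm B(\zeta,r/2)$, and set $M:=\sup_{\ov{B(\zeta,r)}}g<\infty$. Then $h:=g-Mb$ is subharmonic and bounded above on $V$, and $\limsup_{z\to w}h(z)\le 0$ for every $w\in\bd V$ outside the polar set $(P\cap\ov{B(\zeta,r)})\cup\{\zeta\}$: for $w\in\bd B(\zeta,r)$ because there $g\le M$ while $b\ge 1$, and for $w\in\bd D\Sm P$ because there $g(w)=0$ while $b\ge 0$. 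The extended maximum principle gives $h\le 0$ on $V$, so $g\le Mb$, and letting $z\to\zeta$ yields $0\le\limsup_{z\to\zeta}g(z)\le\limsup_{z\to\zeta}Mb(z)=0$; thus (i) holds.

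For (ii)$\Rightarrow$(iii), the crux, we prove the contrapositive: if $K$ is thin at $\zeta$, then $\zeta$ is not regular. If $\zeta$ is isolated in $K$, then $B(\zeta,r)\Sm\{\zeta\}\subseteq D$ for small $r$; a barrier at $\zeta$ would be a positive superharmonic function on a punctured disc tending to $0$ at $\zeta$, and since it is bounded below and $\{\zeta\}$ is polar it extends to a superharmonic function on the whole disc attaining the value $0$ at the interior point $\zeta$, hence identically $0$ — contradicting positivity. If instead $\zeta\in\ov{K\Sm\{\zeta\}}$, the statement is Wiener's criterion: thinness of $K$ at $\zeta$ is equivalent to the regularised reduced function of the constant $1$ relative to $K\cap\ov{B(\zeta,\rho)}$ having value $<1$ at $\zeta$ for all small $\rho$, and this in turn is equivalent to $\zeta$ failing to be a regular boundary point of $D$. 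I expect this last equivalence to be the real obstacle, since it is essentially Wiener's criterion itself and its honest proof requires the Wiener-series estimate for capacities (equivalently, the balayage machinery); I would supply it following the standard development, for instance \cite{ransford}.
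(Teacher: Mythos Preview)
The paper does not prove this theorem at all: it is stated with the citation \cite[4.2.4]{ransford} and used as a black box, so there is no ``paper's own proof'' to compare against. Your sketch is therefore not a reproduction of anything in the paper but an independent outline of the classical argument.

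That said, your outline is broadly sound and follows the standard route found in Ransford's book. The reduction to the non-polar case is correct; the implication (iii)$\Rightarrow$(i) via fine continuity and the polarity of the exceptional set $P$ is the right idea; the equivalence (i)$\Leftrightarrow$(ii) via barriers and the extended maximum principle is the standard mechanism, and your comparison $g\le Mb$ is carried out correctly (upper semicontinuity of $g$ on all of $\C$ handles the boundary points in $\bd D\Sm P$). You are also right that (ii)$\Rightarrow$(iii) is where the real content lies: for non-isolated $\zeta$ this is essentially Wiener's criterion, and an honest proof requires the capacitary series or balayage machinery, which you correctly flag as the main obstacle rather than pretending it is elementary. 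One minor quibble: in the barrier argument for (ii)$\Rightarrow$(i), the ``reinforcement'' of $b$ so that $b\ge 1$ away from $\zeta$ deserves a word of justification (it is standard, via taking $\min(b/\delta,1)$ for suitable $\delta$), but this is routine.
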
 

If every $\zeta \in \partial D$ is a Dirichlet regular boundary point, then $D$ is called a Dirichlet regular domain. 
Correspondingly, if every boundary point of $K$ is non-thin, then $K$ is called non-thin. 
In view of the above cited theorem, $D$ is Dirichlet regular if and only if $K$ is non-thin and in particular non-polar.

In order to keep things simple we shall in the following consider only full non-thin compact sets $K$ and 
thus Dirichlet regular dual domains $D = K^*$. 
For $g$, the Green's function for $D$, we denote by critical point for $g$ any point $c\in D$ 
such that $\napla g(c) := \left(\frac{\partial g}{\partial x}+i\frac{\partial g}{\partial y}\right)(c) = 0$, 
and critical value the image $v=g(c)>0$ of any critical point. 
We denote by a singular value of $g$ any critical value of $g$ as well as the value $0= g^{-1}(K)$ 
and we denote by $\sing(g)$ the set of singular values of $g$. 
It follows from the definition of the potential $p_{\om_K}$ that the critical points of $g$ 
are contained in the convex hull of $K$ and so $\sing(g)$ is bounded.

Our notion of conformal renormalization of a full non-thin disconnected compact set $K$ relies on the fact that the Green's function $g$ for the 
complement $D = \Chat\Sm K$ defines a translation structure with purely imaginary translations on 
$\C\Sm g^{-1}(\sing(g))$, as well as the fact that the corresponding monodromy at $\infty$ is given by the translation $z\mapsto z+i2\pi$. 

As a consequence, the Green's function $g$ of $D$ is the real part of the logarithms of a univalent map $\psi$
mapping the topological disk $U\subseteq D$ around $\infty$ bounded by the maximal singular values level set for $g$ 
to a round disk around $\infty$. For completeness we include a proof.

\begin{lemma} \label{existence_and_uniqueness_of_psi}
	Let $D$ with $\infty\in D\subsetneq \Chat$ be a Dirichlet regular domain, and let $g=g_D:\Chat\to [0,\infty]$ be the Green's function for $D$. 
	Let $L=\max \{v | v\in\sing(g)\}$, $U= \{z | g(z) > L \}$ and $V=\{z | \log|z| > L \}$. Then there exists a 
	biholomorphic mapping $\psi:U \to V$ such that 
	\begin{align}
	|\psi(z)|=e^{g(z)} \label{logpsi_equals_g_of_z}
	\end{align}
Furthermore $\vline \frac{\psi(z)}{z} \vline \to \frac{1}{c\left(\partial D \right)}$. In particular, $\psi$ is unique up to rotation.
\end{lemma}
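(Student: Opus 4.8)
\textbf{Proof proposal for Lemma~\ref{existence_and_uniqueness_of_psi}.}

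The plan is to build $\psi$ by analytically continuing a local \Bottcher-type coordinate from a neighbourhood of $\infty$ over all of $U$ and to identify the constraint $\eqref{logpsi_equals_g_of_z}$ as a normalization. First I would recall that on a punctured neighbourhood of $\infty$ the Green's function has the expansion $g(z) = \log|z| + \gamma + o(1)$, where $\gamma$ is (minus) the Robin constant, and that $g$ is harmonic there; hence $g$ has a harmonic conjugate $\widetilde g$ locally, well-defined modulo $2\pi$, so that $G := g + i\widetilde g$ is holomorphic and $\psi_0 := \e^{G}$ is a well-defined holomorphic map with $|\psi_0| = \e^{g}$ and $\psi_0(z)/z \to \e^{\gamma}$ as $z \to \infty$. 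The key structural observation, already flagged in the text, is that the monodromy of $\widetilde g$ around $\infty$ is exactly $2\pi$ (it equals $2\pi$ times the harmonic measure / the total mass of the equilibrium measure, which is $1$), so $\psi_0 = \e^{G}$ is single-valued on the punctured neighbourhood even though $G$ is not.

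Next I would propagate this over $U$. The set $U = \{g > L\}$ is, by the definition of $L$ as the maximal singular value, contained in $D \setminus \{\text{critical points}\}$, so $g$ is harmonic and has no critical points on $U$; moreover $U$ is connected (it is the nested union of the level sets $\{g > l\}$ for $l > L$, each of which is connected since $g$ is harmonic and proper towards $\infty$ with no critical values above $L$, so $\{g>l\}$ is a topological disk). On the simply-connected-minus-a-point domain $U$ one continues $\widetilde g$; the only possible obstruction to single-valuedness of $\psi_0 = \e^{g + i\widetilde g}$ is the period of $d\widetilde g = {*dg}$ around $\infty$, and that period is $2\pi$ times $\tfrac{1}{2\pi}\int_{\gamma} {*dg} = \tfrac{1}{2\pi}\int_{\gamma}\partial_n g\,ds$, which by Green's formula and property (5) of the Green's function equals the total mass $1$ of $\om_K$; hence the period is $2\pi$ and $\psi := \psi_0$ extends to a single-valued holomorphic map $\psi: U \to \C^*$. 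By construction it satisfies $\eqref{logpsi_equals_g_of_z}$, and its image lies in $V = \{\log|z| > L\}$ since $|\psi| = \e^g > \e^L$ on $U$.

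It remains to show $\psi: U \to V$ is a biholomorphism and to pin down $|\psi(z)/z|$. Since $g$ has no critical points on $U$, $\napla g \neq 0$, so $\psi$ is a local diffeomorphism; and $g = \log|\psi|$ being proper as a map $U \to (L,\infty]$ together with the monodromy computation shows that $\psi$ is a proper map $U \to V$ of degree equal to the winding number of $\psi$ along a level curve $\{g = l\}$, which is $\tfrac{1}{2\pi}\oint d\arg\psi = \tfrac{1}{2\pi}\oint d\widetilde g = 1$. A proper holomorphic map of degree $1$ is a biholomorphism, giving the isomorphism $\psi: U \to V$. Finally, $|\psi(z)/z| \to \e^{\gamma}$ as $z \to \infty$; comparing with the standard identity $g(z) = \log|z| - \log c(\partial D) + o(1)$ (equivalently $\gamma = -\log c(\partial D)$, which is immediate from $g = p_{\om_K} - I(\om_K)$ and $p_{\om_K}(z) = \log|z| + o(1)$, $I(\om_K) = \log c(\partial D)$) yields $|\psi(z)/z| \to 1/c(\partial D)$. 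Uniqueness up to rotation follows because any two such maps differ by a holomorphic map $V \to V$ fixing $\infty$ with unit modulus on every circle $|z| = r$, hence a rotation. The main obstacle, and the point deserving the most care, is the monodromy/period computation showing the conjugate $\widetilde g$ has period exactly $2\pi$ around $\infty$ — i.e. that the flux of $g$ across any level set equals $1$ — together with checking that $U$ is genuinely a once-punctured topological disk so that this single period controls global single-valuedness of $\psi$ on all of $U$.
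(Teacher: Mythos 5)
Your proof is correct, but it takes a genuinely different route from the paper. The paper starts from the Riemann mapping theorem: since $U$ is simply connected it admits a conformal isomorphism $\psi:U\to V$ normalized by $\psi(z)=az+O(1)$, $a>0$, and then the identity \eqref{logpsi_equals_g_of_z} is obtained by recognizing both $\log|\psi|-L$ and $g_D-L$ as the Green's function of $U$ and invoking uniqueness of Green's functions; the asymptotics then follow from $g(z)=\log|z|-I(\om_{\partial D})+o(1)$. You instead build $\psi$ directly as $\exp(g+i\widetilde g)$ by continuing a harmonic conjugate over $U$, using the period computation $\oint *\mathrm{d}g=2\pi\,\om_K(\C)=2\pi$ to get single-valuedness, and then show the resulting map is proper of degree $1$ onto $V$ (degree $=$ flux $=1$; equivalently, injectivity near $\infty$ since $\psi(z)/z\to e^{\gamma}\neq 0$), hence biholomorphic. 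Both arguments rest on the same unproved-in-both fact that $U=\{g>L\}$ is a once-punctured topological disk (your sketch of this via sublevel components and the maximum principle is looser than the rest and deserves the care you flag). What your route buys is constructiveness: it produces $\log\psi=g+i\widetilde g$ explicitly, which is exactly the translation-structure/\Bottcher{} viewpoint the paper invokes right before the lemma and yields the argument function $\tau$ for free; what the paper's route buys is brevity, since existence is free from uniformization and the identification $|\psi|=e^{g}$ needs no period or degree computation. Your uniqueness argument (the transition map $F$ satisfies $|F(w)|=|w|$, so $F(w)/w$ is a unimodular holomorphic function, hence a rotation) is fine and in fact more explicit than the paper, which leaves uniqueness implicit.
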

\begin{proof}
	Since $U$ is a simply connected subset of $\Chat$, the existence of a conformal mapping $\psi: U \to V$ follows from Riemann uniformization theorem. We can normalize the conformal isomorphism $\psi$ by requiring that 
	\begin{align*} 
	\psi(z) = az + O(1),\qquad z\to \infty
\end{align*}
for some $a>0$. Since $\psi(z)$ is holomorphic then $\log|\psi(z)| - L$ is harmonic on $U \setminus \{ \infty \}$ and bounded outside each neighbourhood of $\infty$. Also $\log|\psi(z)| \to L$ for every $\zeta \in \partial D$, so by uniqueness of the Green's function $g_U = \log|\psi(z)| - L$.

Notice also that $g_D(z)-L$ is harmonic on $U \setminus \{ \infty \}$ and bounded outside each neighbourhood of infinity, so by continuity of $g_D$, we find that $\left( g_D(z)-L \right) \to 0$ as $z\to \zeta$ for every $\zeta \in \partial U$ and $g_D(z)-L = \log|z| + O(1)$ as $z \to \infty$. So by uniqueness of the Green's function $\log|\psi(z)| - L=g_U(z)=g_D(z)-L$. Therefore 
\begin{align}
	\log|\psi(z)|=g_D(z)
\end{align}
so $|\psi(z)| = e^{g(z)}$ and since $g(z) = \log|z| - I(\omega_{\partial D}) + o\left(\frac{1}{|z|}\right)$ as $z \to \infty$ then $$|\psi(z)| = \frac{|z|}{c\left(\partial D \right)} + O\left(1\right) \quad \quad \text{as } z \to \infty$$
\end{proof}
We choose $\psi$ so that $\psi(z)/z\to 1/\Cpct(K)$ as $z\to\infty$ and define the normalised argument function $\tau: U \setminus \{ \infty \} \to \T = \R / Z$ by observing that 
$\psi$ is of the form $\psi\left(z\right)=e^{g\left(z\right) + i 2 \pi \tau \left(z\right)}$, i.e. $2\pi \tau$ is a harmonic conjugate to $g$.

The equipotential curve of potential $l>0$ is by definition the level set 
$$g^{-1}(l) =\{ z | g\left(z\right) = l\}.
$$
 A field-line for $g$ is a smooth curve {\mapfromto \ga {\;]a,b[} D}, $0\leq a < b \leq\infty$, 
 which is a solution of the differential equation 
$$
 \ga'(t) = \frac{\napla g(\ga(t)) }{||\napla g(\ga(t))||^2} .
$$ 
Here as usual the overline indicates complex conjugation. This choice of parameterization, relates nicely to the potential since $\frac{\d}{\d t} g(\gamma(t)) = 1$ so  $g(\gamma(t))=t + C$ where $C$ is a constant of integration. 
We shall always take $C=0$, so that the field-line is parametrized by the potential. 
\begin{definition}[Smooth external rays] \label{smooth_rays}
The \textit{smooth external ray} $\Ray_t=\Ray_{t,K}$ for $K$ of argument $t\in\T$ is the range of the maximal field-line for $g$, which contains the curve $\psi^{-1}(\{\e^{l+i2\pi t}| l > L\})$. The domain of $\Ray_t$ is $]a(t),\infty[$, where $a(t)\geq 0$.
And the points $z \in \Ray_t$ are said to have \emph{external argument} $t$.  
\end{definition}

Let $\gamma: {\;]a,b[} \to D$ be any maximal field-line. 
Then $\ga(b):= \lim_{t \to b} \gamma(t)\in\Chat$ exists. We denote it the ascending landing point for $\gamma$. 
If the range of $\ga$ is a smooth external ray then $\ga(b) = \infty = b$. Otherwise $\ga(b)$ is a critical point for $g$. 
If $0<a$ then $\ga(a):= \lim_{t \to a} \gamma(t)\in\C$ exists and is a critical point for $g$. 
We denote it the descending landing point of $\ga$. 
If $\ga$ is a ray $\Ray_t$ and $a=a(t) >0$ 
we also say that $\Ray_t$ bumps into the critical point $\ga(a)$.
If $a=0$, then $\lim_{t \to a} \gamma(t)$ may or may not exist, 
however the accumulation set is a subset of $\partial D$ (see also below). 
We shall not need this case in our further discussions. 
A critical point $z_0$ for $g$ of order $d-1\geq 1$ is easily seen to always be the landing point of precisely $d$ ascending field-lines and precisely $d$ descending field-lines. 
In view of the above we shall also write $\gamma: {\;[a,b]} \to D$, whenever $a>0$.

\begin{definition}[Extended external rays.]
Suppose $\Ray_t$ bumps into a critical point $z_0$ with $g(z_0) = a(t)$. An extended ray $\Ray_t^e$ of argument $t$, 
is a curve {\mapfromto {\Ray_t^e} {[l_0,\infty[} D}, $0 < l_0 \leq a(t)$ such that $g(\Ray_t^e(l)) = l$ for any $l \geq l_0$ and 
$\Ray_t^e(l) = \Ray_t(l)$ for $l>a(t)$.
\end{definition}
Besides the final ray $\Ray_t$ an extended ray $\Ray_t^e$ consists of finitely many maximal field-lines and possibly 
one segment of a field-line connecting at critical points in $\Ray_t^e$. Examples are the left and right extensions 
of $\Ray_t$ which always turns left respective right at the critical points encountered when moving from $\infty$ 
down the extended ray. However extended rays may also mix left and right turns as well as take intermediate options at non-simple critical points.

\section{Simple conformal renormalization} \label{sec:conf_ren}
Let $K$ denote a full compact, non-thin subset of the complex plane with dual domain $D$. 
Let $g:\Chat\to [0,\infty]$ denote the Green's function for $D$. 

For now let us assume that $g$ has exactly one simple critical point of maximal potential, 
i.e. there exists a point $z_0 \in D$, such that $z_0$ is a simple critical point 
and if $z$ is a critical point not equal to $z_0$ then $g(z) < g(z_0)$.

Let $L:=g(z_0)$ and let $U$, $V$, $\psi$ and $\tau$ be as 
in \lemref{existence_and_uniqueness_of_psi} and the trailing remarks of its proof.
With these assumptions the level set $g^{-1}(L)$ is a figure eight curve and two external rays, 
$\mathcal{R}_{t_1}$ and $\mathcal{R}_{t_2}$, $t_1 < t_2< 1 + t_1$ will bump at $z_0$. 
By the Jordan curve theorem the complement $\Chat\Sm(\Ray_{t_1}\cup\Ray_{t_2}\cup\{\infty, z_0\})$ 
consists of two  Jordan domains, $\Omega_1$ and $\Omega_2$, 
with $\Om_1$ containing the rays in the interval $]t_1, t_2[$ and $\Om_2$ 
containing the rays in the interval $]t_2, 1+t_1[$. 
Let $\De_1 = (t_2-t_1)$ and $\De_2=(1+t_1-t_2)$ denote the interior opening angles at $\infty$ of $\Omega_1$ and $\Om_2$ 
(see figure \ref{fig:riemann_sphere}).
We define $K_j = K \cap \Omega_j$ then $K_1 \cap K_2 = \emptyset$ and moreover $K=K_1 \cup K_2$, 
since $K$ is non-thin.

\begin{figure}[h]
	\centering
	\includegraphics{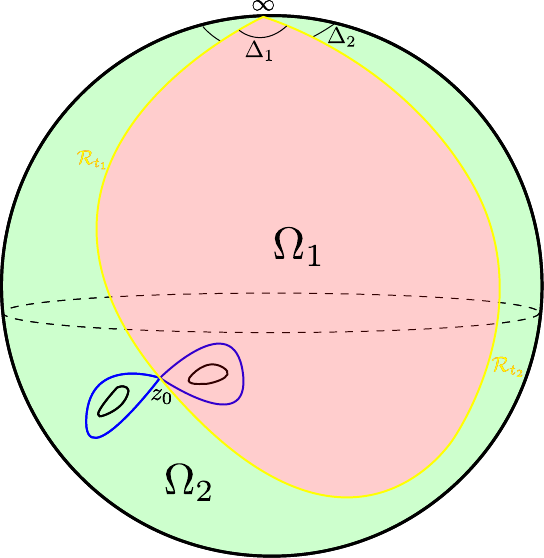}
	\caption{An illustration of a Green's function with a single simple critical point of maximal potential as viewed on the Riemann sphere. In black the boundary of the domain $D$, in blue the equipotential curve $L_{g\left(z_0\right)}$, in yellow the external rays $\mathcal{R}_{t_1} , \mathcal{R}_{t_2}$ landing at a critical point $z_0$, in light red $\Omega_1$ and in light green $\Omega_2$. $\Delta_1$ and $\Delta_2$ are the normalised interior angles between $\mathcal{R}_{t_1}$ and $\mathcal{R}_{t_2}$ at $\infty$ measured in $\Omega_1$ and $\Omega_2$ respectively.} \label{fig:riemann_sphere}
\end{figure}

We shall glue $\mathcal{R}_{t_1}$ to $\mathcal{R}_{t_2}$ equipotentially in order to form new surfaces. 
To this end let $\sim_j$ be the smallest equivalence relation on $\overline{\Omega}_j$ for which 
$x\sim_j y $ for any pair $x\in \mathcal{R}_{t_1}$ and $y \in \mathcal{R}_{t_2}$ with $g(x)=g(y)$. 
Note that we may equivalently require that $\psi(x)e^{\Delta_j 2 \pi i} = \psi(y)$.

\begin{figure}[h]
 \centering 
 \includegraphics{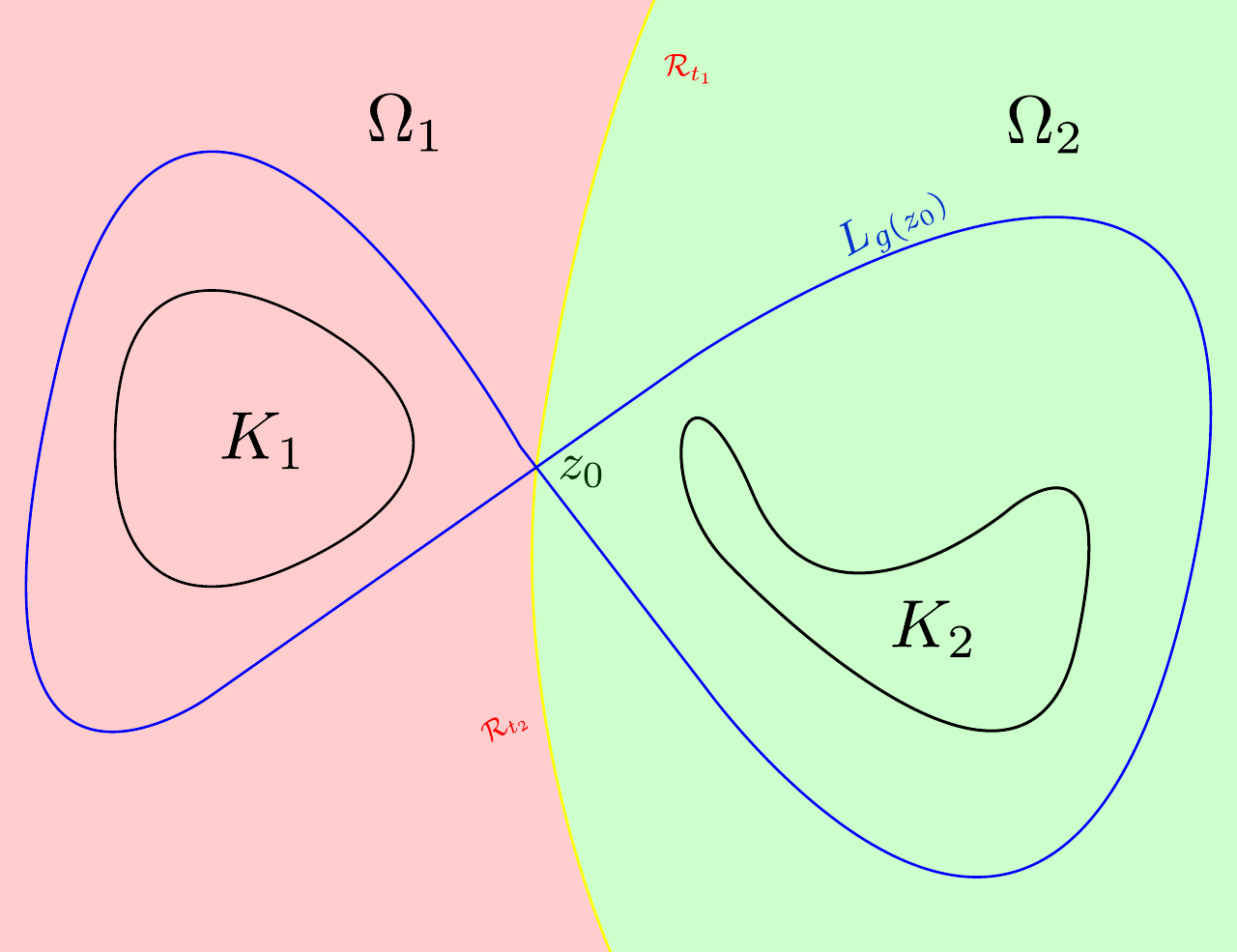}
 \caption{An illustration of a Green's function with a single simple critical point of maximal potential. In black the boundary of the domain $D$, in blue the equipotential curves $L_{g(z_0)}$, in yellow the orthogonal rays, $\mathcal{R}_{t_1},\mathcal{R}_{t_2}$ landing at $z_0$, in light red $\Omega_1$ and in light green $\Omega_2$.} \label{fig:single_simple_critical_point}
\end{figure}

Let $X_j:=\overline{\Omega}_j/{\sim_j}$ be the induced quotient space and let $\pi_j: \overline{\Omega}_j\to X_j$ be the canonical projection. 
We equip $\overline{\Omega}_j$ with the relative topology and $X_j$ with the quotient topology, so that $\pi_j$ is a quotient map. 
Evidently $X_j$ is compact, simply connected and Hausdorff.

The projection $\pi_j$ induces a canonical complex structure on $X_j$ making it conformally equivalent to the Riemann sphere. 
Indeed define $R_j:= \pi_j\left(\mathcal{R}_{t_1}\right) = \pi_j \left( \mathcal{R}_{t_2} \right) $.
\begin{lemma} \label{conform_equivalent}
	There exists a unique complex structure on $X_j$ such that the restricted projection $\pi_{j\;}\vline_{\,\Omega_j}: \Omega_j \to X_j \setminus R_j$ is a biholomorphic mapping.
\end{lemma}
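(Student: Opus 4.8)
The strategy is the standard one for building a complex structure on a quotient: cover $X_j$ by charts, verify that the transition maps are holomorphic, and then invoke the uniqueness of a complex structure compatible with a given one on an open dense subset. First I would transport the complex structure of $\Omega_j$ through $\pi_j|_{\Omega_j}$, which is a homeomorphism onto $X_j\Sm R_j$; this makes $\pi_j|_{\Omega_j}$ biholomorphic by fiat and defines the structure on the open dense set $X_j\Sm R_j$. It remains to produce compatible charts near the points of $R_j$ (including the images $\pi_j(\infty)$ and $\pi_j(z_0)$), and to check that any two choices agree, which will give uniqueness.

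\textbf{Charts along $R_j$.} For a point $p=\pi_j(x)=\pi_j(y)$ with $x\in\Ray_{t_1}$, $y\in\Ray_{t_2}$ and $g(x)=g(y)$ both in the open interval $]0,L[$, and neither equal to a critical point, I would use the map $\psi$ from \lemref{existence_and_uniqueness_of_psi}, or rather its extension along the rays: on a one-sided neighbourhood of $x$ inside $\ov\Om_j$ the composite $z\mapsto\log\psi(z)$ is defined and conformal onto a half-disk in the strip $\{\Re w>L\}$ quotiented appropriately, and likewise near $y$ the map $z\mapsto\log\psi(z)-\De_j\,2\pi i$ (using the normalization $\psi(x)e^{\De_j 2\pi i}=\psi(y)$ from the definition of $\sim_j$) lands in the same strip on the other side. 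These two half-disk charts glue along the slit to give a genuine disk chart around $p$, and on the overlap with $X_j\Sm R_j$ the transition to the structure transported from $\Omega_j$ is holomorphic because both are restrictions of (branches of) $\log\psi$. The point $\pi_j(\infty)$ is handled the same way with $1/\psi$ in place of $\psi$, giving a chart in which $\pi_j(\infty)$ is a smooth point (so $X_j$ is indeed a sphere once we know it is a simply connected compact Riemann surface). The critical point $\pi_j(z_0)$ lies on $R_j$ as well: here $\psi$ is not univalent, but since $z_0$ is a simple critical point of $g$, two rays bump into it, and after passing to the quotient the two "sides" of $\Ray_{t_1}\cup\{z_0\}\cup\Ray_{t_2}$ near $z_0$ are glued into a single smooth arc; a chart is obtained by composing a local square-root-type uniformizer of $g$ near $z_0$ with the gluing, again compatible with the ambient structure off $R_j$.

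\textbf{Uniqueness.} Two complex structures on $X_j$ for which $\pi_j|_{\Omega_j}$ is biholomorphic agree on the open dense set $X_j\Sm R_j$; since $R_j$ is a finite union of real-analytic arcs (hence has empty interior and its complement is dense and connected), the identity map is holomorphic off $R_j$ and continuous everywhere, so by the removable singularity theorem for the arcs---more precisely, because a homeomorphism between Riemann surfaces that is biholomorphic off a closed set with empty interior and locally finite "length" is biholomorphic---the two structures coincide. Alternatively one notes directly that the charts constructed above are forced: any compatible chart near a point of $R_j$ must, by the biholomorphy requirement on the two approaching sides, coincide with the $\log\psi$-chart up to a holomorphic change of coordinates.

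\textbf{Main obstacle.} The only genuinely delicate point is the chart at the critical point $\pi_j(z_0)$ and the verification that the quotient is \emph{smooth} (not merely orbifold-like) there: one must check that gluing $\Ray_{t_1}$ to $\Ray_{t_2}$ equipotentially at a \emph{simple} critical point produces exactly one smooth arc through $p$ with total angle $2\pi$ on each side, so that the local model is a disk rather than a cone. This is where the hypothesis that $z_0$ is simple, and that it is the unique critical point of maximal potential (so that $\Om_j$ is a Jordan domain and only these two rays are involved), is used; the computation is the local normal form $g(z)-L\sim c\,\Re((z-z_0)^2)$ together with the observation that $\De_1+\De_2=1$, which makes the two glued sectors fit together without excess or deficit of angle. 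Everything else is bookkeeping with branches of $\log\psi$.
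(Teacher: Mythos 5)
Your overall scheme (transport the structure of $\Omega_j$ through $\pi_j|_{\Omega_j}$, glue half-disk charts along the scar using branches of $\log\psi$ and the relation $\psi(x)e^{i2\pi\De_j}=\psi(y)$, special charts at $\pi_j(z_0)$ and $\pi_j(\infty)$, uniqueness via holomorphic removability of the scar) is the same as the paper's, and your regular-scar charts and uniqueness argument are fine. However, your chart at $\pi_j(\infty)$ contains a genuine gap, not just a slip. Since identified points satisfy $\psi(y)=\psi(x)e^{i2\pi\De_j}$, neither $\psi$ nor $1/\psi$ descends to the quotient, so the map you propose is not even well defined near $\pi_j(\infty)$. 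Moreover the cone angle of $X_j$ at $\pi_j(\infty)$ is $2\pi\De_j$ (the interior opening angle of $\Om_j$ at $\infty$), not $2\pi$, so after fixing well-definedness one must still open the cone by the power $1/\De_j$: the correct chart is $\left(\psi(\pi_j^{-1}(z))\right)^{-1/\De_j}$, as in the paper, and the exponent is exactly what makes the map single-valued and injective on the glued space. The half-disk gluing trick cannot be ``handled the same way'' at $\infty$, because a neighbourhood of $\pi_j(\infty)$ is a full glued cone, not two half-disks meeting along an arc.

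Relatedly, your ``main obstacle'' paragraph locates the subtlety in the wrong place. At $\pi_j(z_0)$ the identity $\De_1+\De_2=1$ plays no role (it never enters the construction of a single $X_j$): since $z_0$ is a simple critical point, $f=g+ih$ (with $h(z_0)=0$) is a local $2{:}1$ branched cover, the two ascending rays leave $z_0$ in opposite directions, so the interior angle of $\Om_j$ at $z_0$ is $\pi$, and $f\circ\pi_j^{-1}$ itself --- with no square root --- is already a homeomorphism of $\pi_j(\ov{\Om}_j\cap O_{z_0})$ onto a disk; this is the paper's chart. A ``square-root-type uniformizer'' goes the wrong way unless you subsequently square, i.e.\ return to $f$. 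A minor further slip: on $\Ray_{t_1}$ and $\Ray_{t_2}$ the potential ranges over $]L,\infty[$, not $]0,L[$. With the chart at $\infty$ replaced by $\psi^{-1/\De_j}$ and the chart at $z_0$ stated as $f\circ\pi_j^{-1}$, your argument coincides with the paper's.
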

\begin{proof}
	We will need 4 charts to cover $X_j$.
\begin{enumerate}
	\item We can use the inverse of the projection restricted to $\Omega_j$ as the first chart.
	\item For the second chart we first define strips 
		$$V_k = \{z \in \mathbb{C} | g(z) > g(z_0), t_k-\varepsilon < \tau(z) < t_k + \varepsilon \} \quad \text{for} \; k=1,2$$ with $\varepsilon$ sufficiently small such that $V_1 \cap V_2 = \emptyset$. Then the mappings \\ $f_j: \pi_j\left(\overline{\Omega}_j \cap (V_1 \cup V_2) \right) \to \psi(V_2)$ defined by \[
	f_j(z) = \begin{cases} \psi\left(\pi_j^{-1}(z)\right) e^{\Delta_j 2 \pi i} \quad & \text{ for } z \in \pi_j\left(V_j \cap \overline{\Omega}_j\right) \\
	\psi\left(\pi_j^{-1} (z)\right) \quad & \text{ for } z \in \pi_j\left(V_{3-j} \cap \Omega_j\right)
	\end{cases}
	\] 

are evidently homeomorphisms that cover $R_j$. It now remains to cover ${\pi_j(z_0), \pi_j(\infty)}$.
	\item We cover $\pi_j(\infty)$ by the mapping $\left(\psi(\pi_j^{-1}(z))\right)^{-\frac{1}{\Delta_j}}$. Since $ \left( \psi(z) \right)^{\frac{1}{\Delta_j}}$ identifies exactly those points which belong to the same equivalence class, then the mapping is a well defined homeomorphism.
	\item Finally to cover $\pi_j(z_0)$, we use that $z_0$ is a simple critical point of $g$. As a consequence there exists a sufficiently small neighbourhood $O_{z_0} \subseteq \mathbb{C}$ of $z_0$, $\varepsilon_1>0$ and a harmonic conjugate $h$ of $g$ on $O_{z_0}$ with $h(z_0)=0$ such that $f:=g+ih:O_{z_0}\to\D(g(z_0),\varepsilon_1)$ is a branched $2:1$ covering with critical value $g(z_0)$. 
	It follows that $f(\pi_j^{-1}): \pi_j(\overline{\Omega}_j \cap O_{z_0}) \to \mathbb{D}(g(z_0),\varepsilon_1)$ is a chart compatible with the other charts.

\end{enumerate}
Clearly these charts define a complex structure on $X_j$. Fix $j\in\{1,2\}$ and 
let $\mathcal{A}_1$ denote the above complex structure on $X_j$.
Suppose that $\mathcal{A}_2$ is another complex structure on $X_j$, for which $\pi_j\;\vline_{\;\Omega_j}$ is biholomorphic, then any chart on $X_j \setminus R_j$ would clearly have to be compatible.

Suppose that $\phi_1$ and $\phi_2$ are charts in $\mathcal{A}_1$ and $\mathcal{A}_2$ respectively, 
both covering $w \in R_j$. Let $O_w$ be an open neighbourhood of $w$ covered by both charts. 
Then their transition mapping $k:\phi_1(O_w) \to \phi_2(O_w)$ defined by $k = \phi_2 \circ \phi_1^{-1} $ 
is a homeomorphism and it is holomorphic outside the straight line segment $\phi_2(R_j \cap O_w)$, 
which is holomorphically removeable.
Therefore $k$ extends holomorphically to the entire neighbourhood and so the change of charts is holomorphic.
\end{proof}

Since $X_j$ is a compact and simply connected Riemann surface, 
it is isomorphic to the Riemann sphere by the Riemann uniformization theorem.

For $j \in \{1,2\}$, let $\Phi_j:X_j \to \Chat$ be the unique isomorphism normalized by
\begin{align}
\Phi_j(\pi_j(z)) = \left(\e^{-i2\pi t_j}\psi(z)\right)^{\frac{1}{\Delta_j}} + O\left((\psi(z))^{\frac{-1}{\Delta_j}}\right)
\qquad\textrm{as}\qquad z \to \infty, \label{normalise_conf_rocs}
\end{align}
so that in particular $\Phi_j(\pi_j(\infty))=\infty$. 

We define $\phi_j := \Phi_j \circ \pi_j$ and compact subsets $\whK_j = \phi_j\left(K_j\right)\subseteq \C$.

The normalization choosen in equation (\ref{normalise_conf_rocs}) is natural and allows for a cannonical uniformization of $K_1$ and $K_2$.

Each compact set $\whK_j$ is full and non-thin and therefore carries a unique equilibrium measure, $\widehat{\omega}_{j}$, and its dual domain, $\widehat{D}_j$, has a unique Green's function $\widehat{g_j}:\widehat{D}_j\to [0,\infty]$.

We are now ready to establish the connection between $\whK_j$ and $K$.

\begin{theorem} \label{greensfunctions_related} In the notation above \hfill
\begin{enumerate} 
\item $\widehat{g_j}(z)=\frac{1}{\Delta_j}g\left(\phi_j^{-1}(z)\right)$.
\item $\phi_j$ is unique with the following normalizations
\begin{enumerate}
\item $c\left(\whK_j\right) = 1$
\item $\whK_j$ is centered
\item $\phi_j(\Ray_{t_1}) = \phi_j(\Ray_{t_2})$ is an initial subarc of the smooth external ray for $\whK_j$ of argument $0$.
\end{enumerate}
\end{enumerate}
\end{theorem}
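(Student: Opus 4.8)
The plan is to deduce (1) by recognizing $\tfrac1{\De_j}\,g\circ\phi_j^{-1}$ as the Green's function of $\whD_j=\Chat\Sm\whK_j$ and appealing to uniqueness. Since $\Ray_{t_1}$ and $\Ray_{t_2}$ are parametrized by potential, $g$ takes the same value on both points of every $\sim_j$-class, so it descends to $X_j$ and $G_j:=\tfrac1{\De_j}\,g\circ\phi_j^{-1}$ is a well-defined map $\Chat\to[0,\infty]$ (evaluate on any preimage). I would then verify the five properties of \defref{definition_greens_function} for $G_j$ on $\whD_j$ (legitimate as $\whK_j$ is full and non-thin). Those involving only $\phi_j|_{\Om_j}$ are immediate: $g\ge 0$ and $g\equiv 0$ on $K=g^{-1}(0)$ give $G_j\ge 0$ and $G_j\equiv 0$ on $\whK_j$, hence $G_j$ is subharmonic on $\C$, vanishes on $\Chat\Sm\ov{\whD_j}$, and (by continuity of $g$ and of $\phi_j^{-1}$ on $\phi_j(\Om_j)$) tends to $0$ at each point of $\partial\whK_j$; harmonicity of $g$ on $D\Sm\{\infty\}$ makes $G_j$ harmonic on $\phi_j(\Om_j\cap D)$; and $g$ is bounded off neighbourhoods of $\infty$. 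The remaining content — harmonicity of $G_j$ across the seam $\phi_j(\Ray_{t_1})=\phi_j(\Ray_{t_2})$, at $\phi_j(z_0)$, and on a punctured neighbourhood of $\infty$, together with $G_j(w)=\log|w|+O(1)$ as $w\to\infty$ — comes from \lemref{conform_equivalent}: transported to $X_j$, the function induced by $g$ equals $\log|\cdot|$ in chart (2) (using \eqref{logpsi_equals_g_of_z}), $-\De_j\log|\cdot|$ in chart (3), and the real part of the coordinate in chart (4), each harmonic off the puncture; and \eqref{normalise_conf_rocs} with $|\psi|=e^{g}$ gives $|\phi_j(z)|=e^{g(z)/\De_j}\bigl(1+O(e^{-2g(z)/\De_j})\bigr)$, so $G_j(w)=\log|w|+O(|w|^{-2})$ and $G_j(\infty)=\infty$. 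Hence $G_j=\whg_j$. (Applying $\tfrac1{2\pi}\Laplace$ then yields $\whom_j=\tfrac1{\De_j}(\phi_j)_*(\om|_{K_j})$.)

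For the normalizations, I would first check that $\phi_j$ itself satisfies (a)--(c). Near $\infty$ the holomorphic functions $\psi_{\whD_j}\circ\phi_j$ and $z\mapsto\bigl(e^{-i2\pi t_j}\psi(z)\bigr)^{1/\De_j}$ have the same modulus $e^{\whg_j(\phi_j(z))}=e^{g(z)/\De_j}$, hence differ by a unimodular constant; matching the standard normalization of $\psi_{\whD_j}$ (leading coefficient $1/c(\whK_j)>0$) against \eqref{normalise_conf_rocs} pins that constant to $1$ and simultaneously gives $c(\whK_j)=1$, so $\psi_{\whD_j}\circ\phi_j=\bigl(e^{-i2\pi t_j}\psi\bigr)^{1/\De_j}$ near $\infty$. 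Reading off the expansion yields $\psi_{\whD_j}(w)=w+O(1/w)$, which is exactly (a) and (b). For (c): on $\Ray_{t_1}$ one has $\psi(z)=e^{g(z)+i2\pi t_1}$, so $\psi_{\whD_j}(\phi_j(z))=\bigl(e^{g(z)}\bigr)^{1/\De_j}$ is real and positive; hence $\phi_j(\Ray_{t_1})\subseteq\Ray_0(\whK_j)$, an initial sub-arc since $\Ray_{t_1}$ carries only potentials $>L$.

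For uniqueness, the candidates are the maps $\Phi\circ\pi_j$ with $\Phi\colon X_j\to\Chat$ an isomorphism, and any such $\Phi$ is forced to send $\pi_j(\infty)$ to $\infty$: otherwise $\phi^{-1}(\infty)$ is a point of $\ov{\Om}_j\Sm\{\infty\}$ at which $g$ is finite, contradicting that $\tfrac1{\De_j}g\circ\phi^{-1}$ has its pole at $\infty$. Hence any two candidates differ by an affine map $M(w)=aw+b$, with $\whK':=M(\whK_j)=a\whK_j+b$; then $c(\whK')=|a|$ forces $|a|=1$, the transformation law for $\psi_{\whD'}$ shows centeredness of $\whK'$ forces $b=0$, and the transformation law $\Ray_0(a\whK_j)=a\,\Ray_{-\arg a/2\pi}(\whK_j)$ together with (c) forces $\arg a\equiv 0$, i.e. $a=1$. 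Thus $M=\mathrm{id}$ and the map is $\phi_j$.

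The genuinely delicate step is the global harmonicity of $G_j$: proving it harmonic not merely on the conformal image $\phi_j(\Om_j\cap D)$ but across the identified rays and at $\phi_j(z_0)$. \lemref{conform_equivalent} does exactly this, the clean point being that $g$, transported to $X_j$, is locally $\log|\cdot|$, $-\De_j\log|\cdot|$, or a harmonic function of a chart coordinate. Minor bookkeeping: fix the branch of $(\cdot)^{1/\De_j}$ consistently in \eqref{normalise_conf_rocs}, in chart (3) of \lemref{conform_equivalent}, and in the computation of (c); and make explicit that the uniqueness asserted in (2) ranges over isomorphisms $X_j\to\Chat$ sending $\pi_j(\infty)$ to $\infty$.
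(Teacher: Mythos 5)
Your proposal is correct and follows essentially the same route as the paper: part (1) by identifying $\tfrac1{\De_j}g\circ\phi_j^{-1}$ with $\whg_j$ via the asymptotic expansion coming from \eqref{normalise_conf_rocs} and uniqueness of the Green's function, and part (2) by reducing any two candidates to an affine ambiguity through holomorphic removability of the scar and then pinning the affine map with (a)--(c). The only (harmless) difference is that you phrase uniqueness over isomorphisms $X_j\to\Chat$ composed with $\pi_j$, whereas the paper allows arbitrary univalent maps on $\Om_j$ with a continuous ray-gluing extension — the same removability argument shows these classes coincide.
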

\begin{proof}
 (1) Note that $\frac{1}{\Delta_j}g\left(\phi_j^{-1}(z)\right)$ is a well defined function since $g$ respects the equivalence relation. Let $w \in \overline{\Omega}_j$, then as $w \to \infty$ we have
\begin{align}
\log|\phi_j(w)| &= \log\,\vline\psi(w)^{1/\Delta_j} + O\left(\frac{1}{\psi(w)^{1/\Delta_j}}\right)\vline\\
&= \log|\psi(w)^{1/\Delta_j}| + O\left(\left|\frac{1}{\psi(w)^{2/\Delta_j}}\right|\right)\\
&= \frac{1}{\Delta_j}\log|\psi(w)| + O\left(\left|\frac{1}{\psi(w)^{2/\Delta_j}}\right|\right)\\
&= \frac{1}{\Delta_j}g(w) + O\left(\left|\frac{1}{\psi(w)^{2/\Delta_j}}\right|\right) \label{green_error_term}
\end{align}
Since $|\psi(w)|=O(|w|)$ as $w\to \infty$, then $O\left(\left|\frac{1}{\psi(w)^{2/\Delta_j}}\right|\right)$ is $o(1)$ as $w\to \infty$ so  
\begin{align}
\frac{1}{\Delta_j}g(w) = \log|\phi_j(w)| + o\left(1\right) & \text{ as } w\to \infty
\end{align}
Now let $z=\phi_j(w)$ then as $w \to \infty$ we have that $z\to \infty$ so 
\begin{align}
\frac{1}{\Delta_j}g(\phi_j^{-1}(z)) = \log|z|+o(1) & \text{ as } z \to \infty \label{eq:green_for_renormalisering}
\end{align}
And since $\frac{1}{\Delta_j}g(\phi_j^{-1}(z))$ is harmonic and vanishes exactly at $\partial  \widehat{D}_j$ it must, by uniqueness of the Green's function, be the Green's function for the component containing $\infty$ of $\Chat \setminus \whK_j$.

(2) Suppose we have two uniformizations $\phi_j$ and $\phi_j'$ univalent on $\Omega_j$, extending continuously to 
$\Ray_{t_1}\cup\Ray_{t_2}$ with $\phi(\Ray_{t_1}(l)) = \phi(\Ray_{t_2}(l))$. Then $k = \phi_j' \circ \phi_j^{-1}$ is a homeomorphism from $\Chat$ to itself and it is holomorphic outside the arc $\phi_j(\Ray_{t_1})$, which is holomorphically removeable. Therefore $k$ extends holomorphically to all of $\Chat$ and so it must be a mobius transformation. By (1) then $k(\infty)=\infty$ so $k$ must be affine. Now (a), (b) and (c) fixes $\phi$ in the affine class.

(a) It follows immediately from equation \eqref{eq:green_for_renormalisering} that the energy for the equilibrium measure of $\whK_j$ is $0$. Hence the capacity is 1.

(b) It follows from equation \eqref{green_error_term} that $\log|\phi_j(w)| = \frac{1}{\Delta_j}g(w) + O\left(\left|\frac{1}{\psi(w)^2}\right|\right)$ and therefore
$$
\widehat{g}_j(z)= \log|z| + O\left(\left| 1/z^2 \right|\right)
$$
so we have that $\left|e^{\widehat{g}_j(z)}\right| = |z| + O(\left|1/z\right|)$ as $z \to \infty$, i.e. $\whK_j$ is centered.
(c) Property (a) and (b) fixes $\phi_j$ up to rotation, it then follows from the normalization in equation (\ref{normalise_conf_rocs}) that $\phi_j(\Ray_{t_1})=\phi_j(\Ray_{t_2})=\Phi_j(R_j)$ is an initial subarc of the smooth external ray for $\whK_j$ of argument $0$.
\end{proof}
We will call the triple $(\Omega_j, \phi_j, \whK_j)$ for a \textit{simple normalized conformal renormalizations} of $K$. If $f$ is an affine map then the triple $(\Omega_j, f \circ \phi_j, f(\whK_j))$ is called a \textit{simple conformal renormalizations}.
We remark that in this simple case, the image of the critical point, $z_0$, under each $\phi_j$ is no longer a critical point. 
In fact the maximal critical values are strictly smaller than $g_j(\phi(z_0)) = L/\Delta_j$ for each $j=1, 2$. 

\section{Generalisations} \label{sec:generalisations}
In this section we show how to generalize \thmref{greensfunctions_related} aswell as the construction leading up to it. Note that while external rays are always disjoint, two or more extended rays may intersect. This motivates the following definition.
\DEF
Let $\Ray_{t_1}$ and $\Ray_{t_2}$ be two external rays bumping into the same critical point with $t_1 < t_2 < 1 + t_1$. 
And let $\Ray_{t_1}^e, \Ray_{t_2}^e$ be extended rays with domain $[l_0,\infty[$, where $l_0>0$. 
We call $(\Ray_{t_1}^e, \Ray_{t_2}^e)$ an \emph{extended ray-pair} iff $\Ray_{t_1}^e(l_0) = \Ray_{t_2}^e(l_0)$ 
\ENDDEF
Possibly increasing $l_0$ we may assume that $\Ray_{t_1}^e(l) \not= \Ray_{t_2}^e(l)$ for $l>l_0$. 
Then the common end-point $\Ray_{t_1}^e(l_0) = \Ray_{t_2}^e(l_0)$ is a critical point for $g$.

Let $t_1<t_2<1+t_1$ be arguments of external rays co-landing on a simple critical point $z_0$ of maximal potential $L$, 
as in \secref{sec:conf_ren} and let $\Ray_{t_1}^e, \Ray_{t_2}^e$ be the unique extended rays with domain $[L,\infty[$. 
Then both $(\Ray_{t_1}^e, \Ray_{t_2}^e)$ and $(\Ray_{t_2}^e, \Ray_{t_1}^e)$ are examples of extended ray-pairs. 

\begin{definition} \label{def:conf_renormalization}
A \emph{conformal renormalization} of $K$ is a triple $\left(\Omega, \phi, \whK \right)$ such that 
\ENUM
\item 
$\Om$ is a domain in $\C$ with boundary a finite collection of extended ray-pairs 
$(\Ray_{t_1}^e, \Ray_{t_2}^e)$, \ldots, $(\Ray_{t_{2n-1}}^e, \Ray_{t_{2n}}^e)$ where 
$t_1<t_2<t_3<\ldots < t_{2n-1} < t_{2n} < 1+t_1$.
\item $\phi: \Omega \to \C$ is univalent. 
\item $\whK = \phi(K \cap \Omega)$.
\item $\widehat{g}(z)=\frac{1}{\omega(K \cap \Omega)}g\left(\phi^{-1}(z)\right)$, 
where $\omega$ is the equilibrium measure for $K$, 
$g$ is the Green's function for $D= \Chat \setminus K$ 
and $\widehat{g}$ is the Green's function for $\widehat{D}= \Chat \setminus \whK$.
\ENDENUM
Moreover we call $(\Omega, \phi, \whK)$ a \emph{normalized conformal renormalization} of $K$ 
when $\whK$ is centered and $c(\whK)=1$.
\end{definition}
Note that in the above definition
$$
\omega(K \cap \Omega) = 1 - \sum_{j=1}^n (t_{2j}-t_{2j-1}).
$$
We shall refer to $\whK$ as a conformal renormalization of $K$, when $\Omega$ and $\phi$ is understood from the context. It follows from theorem \ref{greensfunctions_related} and the fact that $\Delta_j = \omega(K_j)$ that $\whK_1$ and $\whK_2$ are normalized conformal renormalizations of $K$. 

It is easy to construct examples of compact sets $K$ with two distinct renormalizations $(\Omega_1, \phi_1, \whK_1)$, 
$(\Omega_2, \phi_2, \whK_2)$, with $\Om_1\cap\Om_2=\emptyset$ and $\whK_1=\whK_2$. 
But given $K$ and $\Om$ the map $\phi$ and the set $\whK$ are essentially unique.

\REFTHM{phi_K_uniquenes}
Let $(\Omega, \phi, \whK)$ be a conformal renormalization of $K$. 
Then $\whK$ is unique up to affine transformation and equivalently 
the univalent map $\phi$ is unique up to post-composition by an (the same) affine map. 
Moreover $\phi$ extends continuously to the boundary of $\Om$ with $\phi(\Ray_{t_{2j-1}}^e) = \phi(\Ray_{t_{2j}}^e)$ 
either a segment of a smooth external ray or of an extended external ray for each $j$.
\ENDTHM

\PROOF
Suppose $(\Omega , \phi_1, \whK_1)$ and $(\Omega , \phi_2, \whK_2)$ both are conformal renormalisations of $K$ with the same domain $\Om$. Define  {\mapfromto {f = \phi_2 \circ \phi_1^{-1}}{\phi_1(\Om)}{\phi_2(\Om)}}. 
We must prove that $f$ is affine. 
We first note that $f$ maps field-lines to field-lines and in particular external rays to external rays, 
since $\whg_1(z) = \whg_2(f(z))$ and for the same price $2\pi\whtau_1(z)-2\pi\whtau_2(f(z)$ is locally constant, 
being harmonic conjugates of the Green's functions. 
Also $\whtau_j(z)-\tau(\phi_j^{-1}(z))$, $j=1,2$ are locally constant by property (4). 
It follows that the continuous extension of $\phi_j$ to any of the extended ray-pairs bounding $\Om$, 
maps the pair into the same ray or possibly onto an extended ray. 
But then $f$ extends to a homeomorphism from $\C$ to $\C$.
Finally since any extended ray is holomorphically removeable then $f$ is an isomorphism from $\C$ to $\C$ and therefore $f$ is affine.
\ENDPROOF

\begin{theorem} \label{push_forward_measure}
Let $(\Omega, \phi, \whK)$ be a conformal renormalization of $K$ then
$$\widehat{\omega} = \frac{1}{\omega(K \cap \Omega)} \phi_*(\omega \, |_{(K \cap \Omega)})$$ where $\widehat{\omega}$ is the equilibrium measure for $\whK$ and $\omega$ is the equilibrium measure for $K$.
\end{theorem}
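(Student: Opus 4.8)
The plan is to \emph{differentiate} the Green's-function relation of property~(4) in \defref{def:conf_renormalization}, using the fact recalled in the introduction that the equilibrium measure of a non-thin compact set is $\tfrac{1}{2\pi}$ times the distributional Laplacian of its Green's function (equivalently $\tfrac{1}{2\pi}\Laplace p_\om=\om$ for the logarithmic potential). Set $s:=\om(K\cap\Om)$, which is positive, and $W:=\phi(\Om)$; since $\phi$ is univalent, $\phi\colon\Om\to W$ is a biholomorphism of plane domains, and $\whK=\phi(K\cap\Om)$ is a compact subset of $W$, so $W$ is an open neighbourhood of $\whK$. Property~(4) reads $\whg=\tfrac{1}{s}\,(g\circ\phi^{-1})$ on $W$, so the claim will follow once we identify the Riesz measure of the right-hand side on $W$.

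I would first fix a test function $\chi\in C_c^\infty(W)$ and, using $\whom=\tfrac{1}{2\pi}\Laplace\whg$, write
\[
2\pi\int\chi\,\d\whom \;=\; \int_W \whg\,\Laplace\chi\,\d A \;=\; \frac{1}{s}\int_W (g\circ\phi^{-1})\,\Laplace\chi\,\d A .
\]
Changing variables by $z=\phi(w)$ — legitimate because $\mathrm{supp}\,\chi$ is compact in $W$, hence $\phi^{-1}(\mathrm{supp}\,\chi)$ is a compact subset of $\Om\subseteq\C$ on which the continuous function $g$ is bounded — and invoking the conformal invariance of the Laplacian, $(\Laplace\chi)(\phi(w))\,|\phi'(w)|^2=\Laplace(\chi\circ\phi)(w)$, the last integral becomes $\tfrac{1}{s}\int_\Om g\,\Laplace(\chi\circ\phi)\,\d A$. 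Since $\chi\circ\phi$, extended by $0$, is smooth with compact support in $\C$, the identity $\om=\tfrac{1}{2\pi}\Laplace g$ gives $\tfrac{1}{s}\int_\Om g\,\Laplace(\chi\circ\phi)\,\d A = \tfrac{2\pi}{s}\int(\chi\circ\phi)\,\d\om$. Finally, because $\mathrm{supp}\,\om\subseteq K$ the measure $\om$ restricted to $\Om$ is carried by $K\cap\Om$, so $\int(\chi\circ\phi)\,\d\om=\int_{K\cap\Om}(\chi\circ\phi)\,\d\om=\int\chi\,\d\bigl(\phi_*(\om|_{K\cap\Om})\bigr)$. Combining, $\int\chi\,\d\whom = \tfrac{1}{s}\int\chi\,\d\bigl(\phi_*(\om|_{K\cap\Om})\bigr)$ for every $\chi\in C_c^\infty(W)$.

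To pass from this identity of distributions on $W$ to an equality of measures on $\C$, I would observe that both $\whom$ and $\phi_*(\om|_{K\cap\Om})$ are positive measures with compact support inside $W$: indeed $\mathrm{supp}\,\whom=\bd{\whK}\subseteq\whK\subseteq W$, while $\mathrm{supp}(\om|_{K\cap\Om})\subseteq\mathrm{supp}\,\om\cap\ov{\Om}=\bd K\cap\ov{\Om}$, and since each boundary extended ray-pair of $\Om$ lies in $D$ (an extended ray with domain $[l_0,\infty[$, $l_0>0$, stays in $\{g\ge l_0\}\subseteq D$) we have $\bd\Om\cap K=\emptyset$; hence $\bd K\cap\ov\Om=\bd K\cap\Om$ is a compact subset of $\Om$, whose $\phi$-image is a compact subset of $W$ containing $\mathrm{supp}\,\phi_*(\om|_{K\cap\Om})$. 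Multiplying a given $\chi\in C_c^\infty(\C)$ by a cut-off in $C_c^\infty(W)$ that equals $1$ near the union of these two supports upgrades the displayed identity to all of $C_c^\infty(\C)$, yielding $\whom = \tfrac{1}{s}\phi_*(\om|_{K\cap\Om}) = \tfrac{1}{\om(K\cap\Om)}\phi_*(\om|_{K\cap\Om})$. As a sanity check both sides are probability measures, since $\phi_*(\om|_{K\cap\Om})$ has total mass $s=\om(K\cap\Om)$.

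The only step carrying real content is the change of variables, that is the conformal invariance of the distributional Laplacian under the biholomorphism $\phi$; the remainder is bookkeeping about supports together with reading property~(4) as an identity on the open set $W=\phi(\Om)$, which suffices since $W\supseteq\mathrm{supp}\,\whom$. I do not expect a genuine obstacle; alternatively one could run the whole argument through potentials, inserting $\whg=p_{\whom}-I(\whom)$ and $g=p_\om-I(\om)$ before applying $\Laplace$, but it reduces to the same computation.
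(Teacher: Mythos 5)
Your proof is correct and follows essentially the same route as the paper: both identify the equilibrium measures as $\tfrac{1}{2\pi}$ times the distributional Laplacian of the Green's function (the paper uses the potential $p_\om$, which differs only by a constant), test against smooth compactly supported functions, and transfer the identity through $\phi$ via conformal invariance of the Laplacian. Your version is in fact a bit more careful than the paper's about test-function supports (working first in $C_c^\infty(\phi(\Om))$ and then upgrading by a cut-off, rather than writing $f\circ\phi\in C_c^\infty(\ov\Om)$ directly), but the underlying argument is the same.
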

\begin{proof}
Let $C^\infty_c(U)$ denote be space of compactly supported $C^\infty$-functions on $U \subseteq \mathbb{C}$.

Since $(\Omega, \phi, \whK)$ is a conformal renormalizations then $p_{\widehat{\omega}}(z) = \frac{1}{\omega(K \cap \Omega)} \left( p_{\omega}\left( \phi^{-1} \right) + I(\omega) \right)$, so therefore $$\Laplace p_{\widehat{\omega}} = \Laplace \left(\frac{1}{\omega(K \cap \Omega)} p_{\omega} \circ \phi^{-1} \right)$$
where $\Laplace$ denotes the generalised Laplace operator. Now let $f \in C^\infty_c(\mathbb{C})$ then $f \circ \phi \in C^\infty_c(\overline{\Omega})$ and
\begin{align}
 2 \pi \int_\mathbb{C} f \; \d \left( \frac{1}{\omega(K \cap \Omega)} \phi_{*} \left(\omega \, \vline_{ \; \overline{\Omega}} \right) \right) = & \frac{2\pi}{\omega(K \cap \Omega)}  \int_{\overline{\Omega}}\left(f \circ \phi\right) \; \d \omega \\
 = & \frac{1}{\omega(K \cap \Omega)} \int_{\overline{\Omega}} \left(f \circ \phi\right) \; \Laplace p_{\omega} \\
= & \int_\mathbb{C} f \; \Laplace\left( \frac{1}{\omega(K \cap \Omega)} p_{\omega} \circ \phi^{-1} \right) \\ =&
\int_\mathbb{C} f \; \Laplace p_{\widehat{\omega}} = 2 \pi \int_\mathbb{C} f \; \d \widehat{\omega} 
\end{align}
So $\widehat{\omega} = \frac{1}{\omega(K \cap \Omega)} \phi_{*} \left(\omega \, \vline_{ \; \overline{\Omega}} \right)$. 
\end{proof}

Theorem $\ref{theorem_2}$ is an immediate consequence of the above theorem 
together with the fact that each $(\Omega_j, \phi_j, \whK_j)$, $j=1,2$ is a conformal renormalization of $K$.

\REFTHM{general_renorm} 
Let $K$ be a non-thin compact set and let $\Om$ be a domain in $\C$ with boundary a finite collection of extended ray-pairs 
$(\Ray_{t_1}^e, \Ray_{t_2}^e)$, \ldots, $(\Ray_{t_{2n-1}}^e, \Ray_{t_{2n}}^e)$ where 
$t_1<t_2<t_3<\ldots < t_{2n-1} < t_{2n} < 1+t_1$. 
Then there exists a univalent map {\mapfromto \phi \Om \C} such that $(\Om, \phi, \phi(K\cap\Om))$ 
is a normalized conformal renormalization of $K$.
\ENDTHM

Before we proceed with the proof let us prove the following generalization of \lemref{conform_equivalent}. 
Let $K$ be a non-thin, non-connected compact set with $D$ the connected component of $\Chat\Sm K$ containing $\infty$. 
Let $z_0$ be a critical point for the Green's function $g$ for $D$ and suppose $(\Ray_{t_1}^e, \Ray_{t_2}^e)$, 
{\mapfromto {\Ray_{t_j}^e} {[l_0, \infty[} D}, $j= 1,2$ is an extended ray-pair with $t_1 < t_2 < 1+t_1$ and 
$z_0 = R_{t_1}^e(l_0) = R_{t_2}^e(l_0)$. Write $\Ga:= \Ray_{t_1}^e\cup\Ray_{t_2}^e$ and
let $\Om_1, \Om_2$ be the connected components of $\C\Sm\Ga$ containing the rays $\Ray_t$ 
with $t_1<t<t_2$ respectively $t_2<t<1+t_1$. 

Consider $\om\subseteq \C$ an open neighbourhood of  $\Ga$.
Define $\om_j:=\om\cap\Om_j$ and for each $j=1,2$. 
Let $\sim_j$ denote the smallest equivalence relation on 
$\oom_j= \om_j\cup\Ga$ for which $\Ray_{t_1}^ e(l) \sim_j \Ray_{t_2}^ e(l)$ for every $l\geq l_0$. 
Let $X_j:=\oom_j/\sim_j$ denote the quotient space 
and {\mapfromto {\pi_j} {\oom_j} {X_j}} the canonical projection. 
Set $R_j:= \pi_j(\Ga)$ and equip $X_j$ with the quotient topology. Then
\REFLEM{gluing_ext_Ray_pairs}
The topological space $X_j$ has a unique complex structure for which the restriction 
{\mapfromto {\pi_j} {\om_j} {X_j\Sm R_j}} is biholomorphic. 
\ENDLEM
\PROOF
Possibly reducing $\om$ we can suppose $\om$ is a simply connected subset on which $g(z) > l_0/2$. 
Let {\mapfromto h \om \R} denote the harmonic conjugate of $g$ on $\om$ normalized by $h(z_0) = 0$. 
Then $h\equiv 0$ on $\Ga$. Define $f=g+ih$ on $\om$, then possibly reducing $\om$ further 
we can suppose all critical values of $f$ are contained in the line $[l_0, \infty[$, since the critical points of $f$ 
are the critical points of $g$ in $\om$ and $g$ has only finitely many critical points with critical value at least $l_0/2$. 
Let $l_0 < l_1 < \ldots < l_n$ denote the critical values of $f$ and $l_{n+1}:=\infty$. 
Write $I_k := ]l_k, l_{k+1}[$ for $k=1,\ldots , n$. 
And for $\eps>0$ write $I_\eps := ]-\eps, \eps [$ and $I_\eps(l) := l+I_\eps$for $l\in\R$. 

By compactness of $\Ray_{t_j}^e([l_0, l_n])$ there exists $\eps>0$ such that
\ENUM
\item
The connected component $\De^0$ of $f^{-1}(I_\eps(l_0)\times I_\eps)$ containing $z_0$ 
maps properly onto $I_\eps(l_0)\times I_\eps$ with degree $\deg(f, z_0)$ and unique critical value $l_0$.
\item
For $j=1,2$ and $k = 1, \ldots , n$ the connected component $\De^k_j$ of $f^{-1}(I_\eps(l_k)\times I_\eps)$ 
containing $\Ray_{t_j}(l_k)$ maps properly onto $I_\eps(l_k)\times I_\eps$ 
with degree $\deg(f, \Ray_{t_j}(l_k))$ and if the degree is greater than $1$ unique critical value $l_k$.
\item
For $j=1,2$ and $k = 0, \ldots , n-1$ the connected component $\Xi^k_j$ 
of $f^{-1}(I_k\times I_\eps)$ containing $\Ray_{t_j}(I_k)$  
maps isomorphically onto $I_k\times I_\eps$. 
\item
For $j=1,2$ the connected components $\Xi^n_j$ of  $f^{-1}(I_n\times\infty)$ 
containing $\Ray_{t_j}(I_n)$ maps isomorphically onto $f(\Xi^n_j)\supset I_n$.
\ENDENUM
We define $\whDe_j^0:= \pi_j(\De^0)$, $\whDe^k_j := \pi_j((\De_1^k\cup\De_2^k)\cap\oom_j)$ and 
$\whXi_j^k:=\pi_j(\Xi_1^k\cup\Xi_2^k)\cap\oom_j)$.
For the existence of the complex structures fix $j\in\{1,2\}$. 
We need to provide for each $w\in R_j$ a local chart around 
$w$ compatible with {\mapfromto {\pi_j^{-1}} {X_j\Sm R_j} {\om_j}}.
For  $w = R_j(l)$ with $l\in I_k$ for some $k$ the open set $\whXi_j^k$ is a neighbourhood of $w$ 
and $f\circ\pi_j^{-1}$ is a local chart compatible with $\pi^{-1}$. 
For $w = R_j(l_k)$ with $0<k\leq n$, the set $\whDe^k_j$ is an open neighbourhood of $w$. 
Moreover for $j=1$ the image by $f$ of $\De_1^k$ covers the set $I_\eps(l_k)\times[0,\eps[$ $1+m_{11}^k$ times 
and the set $I_\eps(l_k)\times]-\eps, 0[$ $m_{11}^k$ times, where $m_{11}^k\geq 0$ and 
the image by $f$ of $\De_2^k$ covers the set $I_\eps(l_k)\times]-\eps, 0]$ $1+m_{12}^k$ times 
and the set $I_\eps(l_k)\times]0,\eps[$ $m_{12}^k$ times, where $m_{12}^k\geq 0$.
Hence {\mapfromto {f(\pi_1^{-1}}{\whDe_1^k}{I_\eps(l_k)\times I_\eps}} is a topological branched covering of
degree $d_1^k:= 1+m_{11}^k+m_{12}^k$ with unique critical value $l_k$ and thus $(f(\pi_1^{-1})-l_k)^{1/d_1^k}$ 
is a local chart holomorphically compatible with $\pi^{-1}$. And similarly for $j=2$. 
(Note that since $l_k$ is a critical value of $f$ we have $m_{11}^k+m_{12}^k+m_{21}^k+m_{22}^k\geq 1$.)
Finally for $w=\pi_j(z_0)= R_j(l_0)$ the set $\whDe_j^0$ is an open neighbourhood of $w$ and 
the image of $\De^0\cap\oom_j$ by $f$ covers $]l_0,l_0+\eps[$ $1+d_j^0$ times and 
$I_\eps(l_0)\times I_\eps\Sm [l_0,l_0+\eps[$ $d_j^0$ times so that 
{\mapfromto {f(\pi^{-1})} {\whDe_j^0} {I_\eps(l_0)\times I_\eps}} is a topological degree $d_j^0$ branched covering 
with unique critical value $l_0$ and thus $(f(\pi_1^{-1})-l_0)^{1/d_j^0}$ 
is a local chart holomorphically compatible with $\pi^{-1}$. 
Clearly any pair of these charts are also mutually holomorphically compatible. 
This proves existence. 

For the uniqueness suppose {\mapfromto \chi \Si \C} is a chart around $w\in R_j$ 
holomorphically compatible with $\pi_j^{-1}$. Let $\eta$ be one of the above charts defined around $w$, 
the $\chi\circ\eta$ is holomorphic off the real line by construction. 
However as the real line is holomorphically removeable $\chi\circ\eta$ is everywhere holomorphical and 
so belongs to the same complex structure as defined above.
\ENDPROOF

\begin{proof}[Proof of theorem \ref{general_renorm}]

Define $\sim$ to be the smallest equivalence relation on $\overline{\Omega}$ such that for any $j\in\{1, \ldots, n\}$ 
and any $x \in \mathcal{R}_{t_{2j-1}}$, $y \in \mathcal{R}_{t_{2j}}$ with $g(x) = g(y)$ : $x\sim y$. 
Then $X:=\overline{\Omega}/{\sim}$ equipped with the quotient topology is compact, 
connected, simply connected and Hausdorff.
Let {\mapfromto \pi {\ov{\Om}} X} denote the canonical projection.

\noindent\textbf{Claim 1:}
The space $X$ has a unique complex structure for which the restriction
 {\mapfromto \pi {\Om} {X\Sm\pi(\bd{\Om})}} is biholomorphic.
 
\noindent\textbf{Proof of Claim 1:} Indeed by \lemref{general_renorm} above we need only check that $\pi(\infty)$ is a puncture. 
 To this end we define a conformal isomorphism of a neighbourhood of $\pi(\infty)$ onto a neighbourhood of $\infty$ 
 compatible with $\pi$ and hence with the other local charts. 
 Let {\mapfromto \psi U V} be as in \lemref{existence_and_uniqueness_of_psi}. 
 Then $\Om\cap U$ has $n$ connected components mapping univalently by $\psi$ to the $n$ sectors in $V$ 
 consisting of points with arguments in the intervals $]t_{2j}, t_{(2j+1)\mod 2n}[$. 
 Let $U' :=\pi(\ov{\Om}\cap U)$
 Denote by $\Si_j$ the component of $\Om\cap U$ intersecting $\Ray_t$ with $t\in ]t_{2j}, t_{(2j+1)\mod 2n}[$ 
 and write $\De_j := (t_{(2j+1)\!\!\!\mod\! 2n}-t_{2j})$.
 Let $\De = \sum_{j=1}^n{\De_j} = \om(K\cap\Om)$, let $\tau_1 = 0$ and let $\tau_j = \sum_{k=1}^{j-1}\De_k$ for $j=2, \ldots , n$. 
Define a homeomorphism {\mapfromto \varphi {\pi(U')} {V':=\{ z | \log|z| > L/\De\}}} by
$$
\varphi(\pi(z)) = \e^{i2\pi\tau_j/\Delta}\left(\frac{\psi(z)}{\e^{i2\pi t_{2j}}}\right)^{1/\De}\qquad z\in\ov{\Si}_j,
$$
where the root is the principal root, i.e.
$$
\left(\frac{\psi(z)}{\e^{i2\pi t_{2j}}}\right)^{1/\De} := \exp\left(\frac{g(z)+i(h(z)-2\pi t_{2j})}{\De}\right).
$$
Then $\varphi$ is holomorphic for the complex structure already defined on $X\Sm\pi(\infty)$ 
and so defines a chart around $\pi(\infty)$ sending $\infty$ to $\infty$. 
Since there are charts around each scar $\pi(\Ray^e_{t_{2j-1}})=\pi(\Ray^e_{2j})$ mapping the scar to a line segment, 
the scars are holomorphically removeable. 
And so any homeomorphism {\mapfromto \eta W {\eta(W)\subset\C}} holomorphically compatible with $\pi$ 
and defined near a point $w_0$ on some scar or the point $\pi(\infty)$ is holomorphically compatible with the charts defined above. 
Thus the complex structure is uniquely defined by $\pi$.

Let {\mapfromto \Phi X \Chat} denote the isomorphism normalized by 
\begin{align}
\Phi(\pi(z)) = \varphi(\pi(z))+ \oo(1)
\qquad\textrm{as}\qquad z \to \infty, \label{normalise_gen_conf_rocs}
\end{align}
so that in particular $\Phi(\pi(\infty))=\infty$. 
Then {\mapfromto \phi \Om \C} give by $\phi:=\Phi\circ\pi$ is univalent. Let $\whK := \phi(\Om\cap K)$ and let 
$\whR_j:= \Phi(\pi((\Ray^e_{t_{2j-1}}))=\Phi(\pi(\Ray^e_{2j}))$.
Then

\noindent\textbf{Claim 2:} 
The triple $(\Om, \phi, \whK)$ is a conformal renormalization of $K$.

\noindent\textbf{Proof of Claim 2:} The proof is verbatim the same as that of \thmref{greensfunctions_related}
\end{proof}

\EXA
Let $g$ be the Green's function defined by a non-thin compact set $K$ and suppose $g$ has a unique critical point
$z_0$ of maximal potential $L=g(z_0)$. 
Let $\Ray_{t_1}$ and $\Ray_{t_2}$ with $t_1 < t_2 < 1 + t_1$ be two smooth rays landing at $z_0$ 
and let {\mapfromto {\Ray_{t_j}^e} {[L,\infty[} \C} be the unique extensions to $[L,\infty[$. 
Then there exist conformal renormalizations $(\Omega_j, \phi_j, \whK_j)$ where $\Om_j$, $j=1, 2$
are the connected components of $\C\Sm(\Ray_{t_1}^e\cup\Ray_{t_2}^e)$. 
If the order of the critical point $z_0$ is greater than one then at least for one of the two, say for  $j=1$ the image
$\phi_j(\Ray_{t_2}^e)$ is an extended ray with endpoint a critical point of lower order.
\ENDEXA

Next we will consider the case of an arbitrary number of critical points of maximal potential and a region $\Omega$ bounded by some finite number of co-landing pairs of rays. An example of such a case is sketched in figure \ref{fig:two_critical_points_of_max_potential}. 
\EXA
Let $g$ be a Green's function defined by a non-thin compact set $K$ and suppose $z_1, ..., z_m$, $m\geq 1$ are distinct critical points of maximal potential $L = g(z_1) = \ldots = g(z_m)$ . 
For $j \in \{1, ..., n\}$ let $\Ray_{t_{2j-1}}$ and $\Ray_{t_{2j}}$ be a pair of smooth rays co-landing at $z_{k(j)}$ with $t_1< t_2<\ldots< t_{2n-1} < t_{2n} < 1 + t_1$. 
Let  {\mapfromto {\Ray_{t_j}^e} {[L,\infty[} \C} be the unique such extensions for each $j$. 
Then for each connected component $\Om$ of 
$\C\Sm \left(\bigcup_{j=1}^{2n} \Ray_j^e \right)$
there exists a conformal isomorphism {\mapfromto \phi\Om \C} such that $(\Om, \phi, \whK)$, where $\whK:= \phi(K)$ 
is a conformal renormalization of $K$.
\ENDEXA

\begin{figure}[h]
 \centering
  \includegraphics{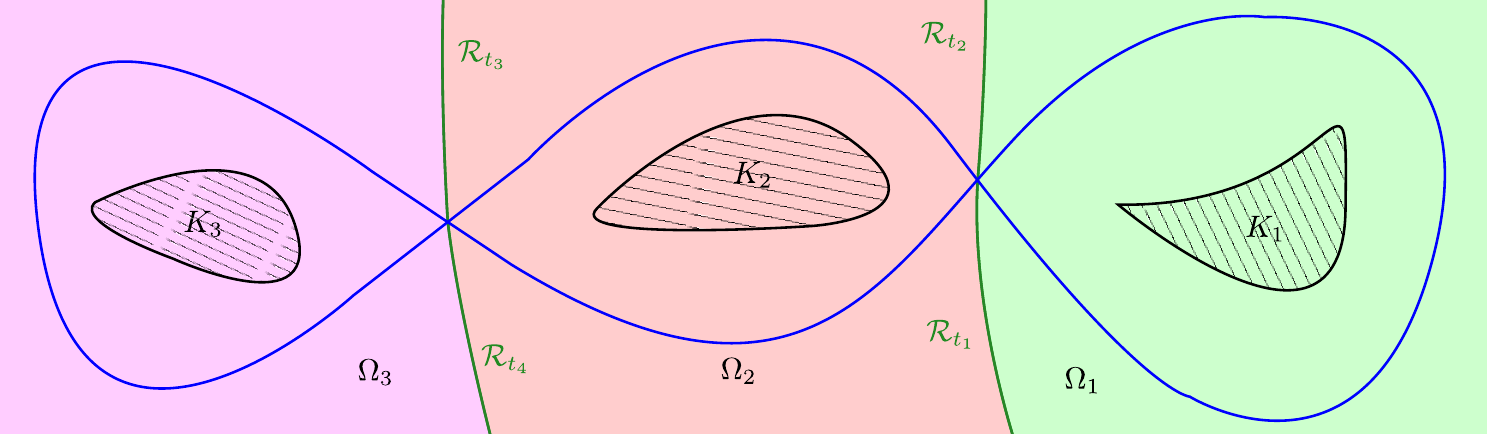} 
  \caption{An illustration of a Green's function with two simple critical points of maximal potential. In black the boundary of the domain $D$, in blue the equipotential curves $L_{g(z_1)}=L_{g(z_2)}$, in dark green the orthogonal rays, $\mathcal{R}_{t_1},\mathcal{R}_{t_2},\mathcal{R}_{t_3},\mathcal{R}_{t_4}$ landing at $z_2$ and $z_1$ respectively, in light green, light red and light purple the domains $\Omega_1, \Omega_2$ and $\Omega_3$ respectively.} 
\label{fig:two_critical_points_of_max_potential}
\end{figure}

\subsection{Iterated renormalization}

Conformal renormalization behaves well under iteration. 
\REFPROP{iterated_renorm}
Suppose $( \Omega, \phi, \whK)$ is a conformal renormalization of a non-thin compact set $K$. 
And suppose further that $(\widehat{\Omega}, \whphi, \widehat{\whK})$ is a conformal renormalization of $\whK$. 
Define $\widehat{\whOm}:=\phi^{-1}(\whOm)$ and {\mapfromto {\widehat{\whphi}:=\whphi\circ\phi} {\widehat{\whOm}} \C}, 
then $(\widehat{\whOm}, \widehat{\whphi}, \widehat{\whK})$ is a conformal renormalization of $K$. 
\ENDPROP

\PROOF
Clearly points (1)-(3) of \defref{def:conf_renormalization} are fullfilled. 
For (4) let $g$ denote the Green's function defined by $K$, $\whg$ the Green's function defined by $\whK$ and 
$\widehat{\whg}$ the Green's function defined by $\widehat{\whK}$. 
And let $\om$ denote the equilibrium measure for $K$, $\whom$ the equilibrium measure for $\whK$ and 
$\widehat{\whom}$ the equilibrium measure for $\widehat{\whK}$.
Then
\ALIGN
\widehat{\widehat{g}}(z) 
&= \frac{1}{\widehat{\omega}\left(\whK \cap \widehat{\Omega}\right)}\widehat{g}\left(\widehat{\phi}^{-1}(z)\right)
= \frac{1}{\widehat{\omega}\left(\whK \cap \widehat{\Omega}\right) \omega\left(K \cap \Omega\right)} g\left(\phi^{-1}\left(\widehat{\phi}^{-1}(z)\right)\right)\\
&= \frac{1}{\omega\left(K \cap \widehat{\whOm}\right)}g\left(\widehat{\whphi}^{-1}(z)\right).
\end{align*}
Thus $(\widehat{\whOm}, \widehat{\whphi}, \widehat{\whK})$ is a conformal renormalization of $K$.
\ENDPROOF

\subsection{Direct deeper renormalizations}

Towards a proof of \thmref{thm_1}. 
Let $g$ be the Green's function defined by a non-thin compact set $K$. 
And let $W$ be a Jordan domain with smooth boundary, along which $\napla g$ points out of $W$, 
i.e. let {\mapfromto \de {[a,b]} {\bd{W}}} be a regular ($\de'$ never vanishing) parametrization, 
then $|\Arg(i\napla g(\de(t))/\de'(t))|<\pi/2$ for every $t\in[a,b]$. 
\REFPROP{W_uniqueness}
Suppose {\mapfromto \phi W \C} is a univalent map such that the Green's function $\whg$ 
defined by $\whK:= \phi(W\cap K)$ satisfies
$$
\widehat{g}(z)=\frac{1}{\omega(K \cap \Omega)}g\left(\phi^{-1}(z)\right).
$$
Then $\phi$ is unique up to post-composition by an affine map and equivalently $\whK$ is unique up to affine transformation.
\ENDPROP
\PROOF
Let {\mapfromto {\phi_j} W \C}, $j=1,2$ be two univalent maps with $g(z) = \whg_j(\phi_j(z))$, 
where $\whg_j$ is the Green's function defined by $\whK_j := \phi_j(K)$. 
Let $\om$ denote the equilibrium measure on $K$ and $\whom_j$ the equilibrium measure on $\whK_j$. 
Then $\whom_j = (\phi_j)_*(\om|_{(K\cap W)})/\om(K\cap W)$ by a proof similar to that of  \thmref{push_forward_measure}.
Thus $\whg_j$ has no critical point outside $W_j$ and in fact not in a neighbourhood of $\bd{W}_j$ by 
the hypothesis $\napla g$ points out of $W$. 
Hence the univalent maps {\mapfromto {\psi_j = \exp(\whg_j+i\whh_j}{U_j}{V_j}} of \lemref{existence_and_uniqueness_of_psi} 
univalently extend to a neighbourhood of $\Chat\Sm W_j$ for each $j$. 
Moreover $h-(\om(K\cap W)\cdot\whh_j\circ\phi_j)$ are constant functions where defined. 
Thus possibly adding a constant to say $\whh_2$ and adjusting domains we can suppose 
$\om(K\cap W)\cdot\whh_1\circ\phi_1=\om(K\cap W)\cdot\whh_2\circ\phi_2$ on $W\Sm W'$, 
where $K\cap W \subset W'$ and $\ov{W}'\subset W$. 
Let $\whW'_j:=\phi_j(W')$. Then 
{\mapfromto {\phi_2\circ\phi_1^{-1} = \psi_2\circ\psi_1^{-1}}{\whW_1\Sm\whW_1'}{\whW_1\Sm\whW_1'}} 
is an isomorphism. Hence we can define an isomorphism {\mapfromto \eta \Chat \Chat} fixing $\infty$ by 
$$
\eta(z) = 
\begin{cases}
\phi_2\circ\phi_1^{-1}(z),\qquad & z\in\whW_1,\\
\psi_2\circ\psi_1^{-1}(z),\qquad & z\not\in\whW_1.
\end{cases}
$$
Since $\eta(\infty) = \infty$, the map $\eta$ is the desired affine map such that $\phi_2 = \eta\circ\phi_1$ and 
$\whK_2=\eta(\whK_1)$.
\ENDPROOF

\subsection{Proof of \thmref{thm_1}.}
Let $g$ be the Green's function defined by a non-thin compact set $K$. Let $l>0$ be arbitrary 
and let $W$ be a bounded connected component of $\C\Sm g^{-1}(l)$. 
Let $I\subset \T$ denote the set of arguments $t$ such that $\Ray_t\cap W\not=\emptyset$. 
Then either $I=\T$ and $W$ contains $K$ or $I$ is a finite union of open intervals. 
Let $t_1< t_2 < \ldots < t_{2n} < 1+t_1$ be the endpoints of these intervals such that 
$$
I = \bigcup_{j=1}^n ]t_{2j}, t_{2j+1\!\!\!\mod\! 2n}[
$$
Then there are $n$ distinct critical points $z_1, \ldots z_n$ and pairs of extend rays 
$(\Ray_{t_{2j-1}}^e, \Ray_{t_{2j}}^e)$ co-landing on $z_j$ and bounding a domain $\Om$ 
such that $W\subset\Om$ and $W\cap K = \Om\cap K$.
Let $(\Om, \phi, \whK)$ denote the corresponding conformal renormalization of $K$, 
whose existence is assured by \thmref{general_renorm}. 
Then by \thmref{push_forward_measure} the restriction of $\phi$ to $W$ fulfills the requirements of \thmref{thm_1} for $W$. 
Moreover by \propref{W_uniqueness} the map $\phi$ is unique up to post composition by an affine map. 
Since $W$ was an arbitrary connected component of $\C\Sm g^{-1}(l)$. 
We have proved \thmref{thm_1}. 

By \thmref{thm_1} each bounded connected component of $\C\Sm g^{-1}(l)$ induces a distinct conformal renormalization. The next theorem gives an easy way to determine the number of bounded connected components of $\C \setminus g^{-1}(l)$.
\begin{theorem}
Let $l>0$, if $g$ has $m$ critical points of order $n_1, n_2, ..., n_m$ contained in $\{z | g(z) \geq l\}$, then $\C \setminus g^{-1}(l)$ consists of $1 + \sum_{j=1}^m n_j $ bounded connected components.
\end{theorem}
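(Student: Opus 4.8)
The plan is to prove this by an Euler characteristic / critical point counting argument applied to the equipotential level set $g^{-1}(l)$, exploiting the structure of the field-line flow. The key observation is that the bounded complementary components of $g^{-1}(l)$ are in bijection with the ``branches'' of the figure-eight-like singular part of the foliation by equipotentials lying above level $l$, and each critical point of order $n_j$ contributes exactly $n_j$ new branches when one descends through its critical value.

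\medskip

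\textbf{Step 1: Reduce to counting components of $\{g < l\}$ and their relation to $K$.} First I would note that since $K = g^{-1}(0)$ and $g$ is continuous and proper near $K$, the open set $\{z : g(z) < l\}$ has finitely many connected components (one could see this from the finiteness of the critical set), and each bounded component of $\C \setminus g^{-1}(l)$ is precisely one such component together with the part of the level set $g^{-1}(l)$ on its boundary. So it suffices to count the connected components $N(l)$ of $\{g < l\}$, and the claim becomes $N(l) = 1 + \sum_{j=1}^m n_j$.

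\medskip

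\textbf{Step 2: Analyze how $N(l)$ changes as $l$ increases.} I would let $l$ increase from $0^+$ to $+\infty$ and track $N(l)$. For $l$ larger than all critical values of $g$, the set $\{g < l\}$ is connected: indeed $\{g \ge l\}$ is then a single topological disk neighbourhood of $\infty$ (by \lemref{existence_and_uniqueness_of_psi}, since above the maximal singular value $\psi$ is a biholomorphism to a round disk), so its complement $\{g<l\}$ is connected and $N(l) = 1$. Conversely, as $l$ decreases past a critical value $v = g(c)$ where $c$ has order $n$ (i.e.\ $g$ looks like $\mathrm{Re}(w^{n+1})$ in local coordinates, so $\napla g$ vanishes to order $n$), the local picture of the level set $g^{-1}(v)$ near $c$ is $2(n+1)$ rays emanating from $c$, separating a neighbourhood of $c$ into $2(n+1)$ sectors, alternately in $\{g<v\}$ and $\{g>v\}$ — i.e.\ $n+1$ sectors on the sublevel side. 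Just above level $v$, in a small punctured neighbourhood of $c$ these $n+1$ sublevel sectors all lie in $\{g > v - \epsilon\}$ and, crucially, they are connected to each other \emph{only through $c$'s neighbourhood from below}; for generic nearby levels the relevant merging happens exactly at $v$. The standard Morse-theoretic bookkeeping (for the function $-g$, with the caveat that $g$ is harmonic so these are all saddle-type degenerate critical points, never local extrema away from $K$ and $\infty$) gives: passing downward through $v$, the number of components of the sublevel set increases by exactly $n$ — the critical point of order $n$ merges $n+1$ local sheets into the picture, i.e.\ creates $n$ additional bounded components. Summing over all critical points $c_1, \dots, c_m$ with $g(c_j) \ge l$ and orders $n_1, \dots, n_m$, starting from $N = 1$ above all critical values, yields $N(l) = 1 + \sum_{j=1}^m n_j$.

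\medskip

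\textbf{Step 3: Make the ``merging'' count rigorous via the $\psi$-coordinate and field lines.} To justify that each critical point of order $n$ contributes exactly $n$ (and not fewer — no two critical points' contributions cancel, and no component that splits off ever re-merges at a lower level), I would use the field-line structure: every point of $\{0 < g < l\}$ lies on a unique field line which ascends either to $\infty$ (through a smooth external ray $\Ray_t$) or to a critical point. The components of $\{g < l\}$ are thus indexed by the ``maximal arcs'' of external arguments $t \in \T$ cut out by the arguments of the extended rays landing at critical points of potential $\ge l$; an order-$n$ critical point at potential $\ge l$ is the landing point of $n+1$ extended rays reaching down to it (counting the two that bump at its own critical value plus branchings), and the combinatorics of how many arcs of $\T$ are created works out to exactly the claimed total $1 + \sum n_j$. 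Alternatively, and perhaps most cleanly, I would invoke the Riemann–Hurwitz / argument-principle count: on $\{g > l\} \setminus \{\infty\}$ the $1$-form $\d(g + ih)$ (locally) has as many zeros as prescribed by the orders $n_j$ of the enclosed critical points, and a zero of order $n_j$ forces the level set $g^{-1}(l)$, viewed as the boundary, to have exactly $n_j$ extra ``pinch points'' relative to a simple disk, hence $n_j$ extra bounded complementary components.

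\medskip

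\textbf{Main obstacle.} The genuinely delicate point is Step 2/3: proving that components once separated never re-merge at a lower potential, and that distinct critical points contribute independently — in other words, that the naive Morse count is exact with no interaction terms. This is where harmonicity of $g$ is essential (no local maxima or minima of $g$ in $D \setminus \{\infty\}$, so the sublevel sets only ever gain components as $l$ decreases, never lose them), and I expect the cleanest route is to phrase everything in terms of the field-line flow descending from $\infty$: since every field line through $\{0 < g\}$ ascends to $\infty$ or to a critical point, the components of $\{g < l\}$ biject with the components of the set of field lines not blocked above level $l$, and the blocking is governed entirely by the extended rays landing at the critical points with $g \ge l$. Carefully setting up this bijection — handling the subtlety that an order-$n$ critical point is met by $2(n+1)$ local field-line germs but only $n+1$ ``downward'' sectors — and then counting arcs on $\T$ is the technical heart of the argument; everything else is routine.
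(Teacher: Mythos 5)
Your route is genuinely different from the paper's. The paper proves this statement by induction on the number of critical points above level $l$, using the renormalization machinery it has just built: the $n_{m+1}+1$ rays landing at a critical point $z_{m+1}$ of maximal potential cut the sphere into $n_{m+1}+1$ domains, each carrying a conformal renormalization by \thmref{general_renorm}; the remaining critical points are distributed among these domains with their orders preserved (since $\widehat g_j=\frac{1}{\Delta_j}\,g\circ\phi_j^{-1}$ with $\phi_j$ univalent), and summing the inductive count $1+\sum$ over the $n_{m+1}+1$ pieces gives $1+\sum_{k=1}^{m+1}n_k$. Your proposal instead counts sublevel components directly by a Morse-type/argument-principle analysis of $g$, with no renormalization at all; if completed it would be more self-contained (it needs neither \thmref{general_renorm} nor the gluing construction) and is the more classical potential-theoretic argument, whereas the paper's proof is designed to exploit the fact that renormalization preserves the critical-point data.

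However, as written the decisive step is missing, and you flag it yourself: the claim in Step 2 that descending through a critical value raises the number of sublevel components by exactly the total order of the critical points on that level --- with no interaction between critical points sharing a level and no re-merging lower down --- is precisely what has to be proved, and neither strategy sketched in Step 3 is carried out. The field-line/arc-of-arguments bookkeeping is in fact delicate when critical points stack: an order-$n$ critical point is the landing point of $n+1$ ascending field lines, but not necessarily of $n+1$ smooth external rays, so the count of arcs of $\T$ is not as direct as you suggest. The argument-principle variant you mention does close the gap cleanly: for a regular value $l$, the maximum principle for the subharmonic function $g$ shows $\{z\,|\,g(z)>l\}$ has no bounded component, so $g^{-1}(l)$ is a disjoint union of $p$ analytic Jordan curves whose interiors are exactly the bounded complementary components (no nesting, again by connectedness of $\{g>l\}$); applying the argument principle to the holomorphic function $2\,\partial g/\partial z$ on $\{g>l\}\cup\{\infty\}$ --- whose zeros are the critical points above level $l$ with multiplicity equal to their order, plus a simple zero at $\infty$ in the coordinate $1/z$, and whose winding along each boundary curve, suitably oriented, is $+1$ --- yields $p=1+\sum_j n_j$. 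For $l$ equal to a critical value one then compares with a slightly smaller regular value $l'$: the band $\{l'\le g<l\}$ contains no critical points, so the descending field-line flow shows each component of $\{g<l\}$ contains exactly one component of $\{g<l'\}$, and no component disappears since each must meet $K$ by the minimum principle. Until these points (including your unproved assertions that $\{g>l\}$ has no bounded components and that the count is constant on critical-value-free intervals) are written out, the proposal is an outline rather than a proof.
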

\begin{proof}[Proof by induction.]

The base case, which is where $m=1$ follows from the fact that the critical equipotential level 
is topologically a bouquet of $n_1 + 1$ circles. 
Now assume the induction hypothesis for all $k \leq m$. 
Suppose there is $m+1$ critical points $z_1, z_2, \ldots, z_{m+1}$ of order $n_1,  \ldots, n_m, n_{m+1}$ 
and assume without loss of generality that the $z_{m+1}$ is of maximal potential. 
Then there is a total of $n_{m+1}+1$ rays $\mathcal{R}_{t_1}, \mathcal{R}_{t_2}, ... , \mathcal{R}_{t_{n_{m+1}+1}}$ 
landing on the critical point $z_{m+1}$. 
These rays divides $\Chat$ into $\Omega_1, \Omega_2, \ldots, \Omega_{n_{m+1}+1}$ 
each inducing conformal renormalizations $(\Omega_j, \phi_j, \whK_j )$ of $K$ 
by theorem \ref{general_renorm}.  

Since the rays $\mathcal{R}_{t_1}, \mathcal{R}_{t_2}, ... , \mathcal{R}_{t_{n_{m+1}+1}}$ all land at $z_{m+1}$ then $z_{m+1} \not \in \bigcup_{j=1}^{n_{m+1}+1} \Omega_j$. On the other hand since $z_{m+1}$ is of maximal potential then for $ 1 \leq k \leq m$ we have that $z_k \not\in \bigcup_{j=1}^{n_{m+1}+1} \overline{\mathcal{R}}_{t_j}$ so $z_k \in \bigcup_{j=1}^{n_{m+1}+1}  \left(\Omega_j \cap \{z | g(z) \geq l\} \right)$. Since $\Omega_1, \ldots, \Omega_{m+1}$ are disjoint then each $z_k$ is contained in exactly one of the domains. If $z_k \in \Omega_j$ then since $\phi_j$ is univalent on $\Omega_j$ we have that $\phi_j(z_k)$ is a critical point of order $n_k$. 

Therefore if we let $\widehat{g}_j$ denote the Green's function for $\whK_j$, let $\widehat{m}_j$ denote the number of critical points of $\widehat{g}_j$ and let $\widehat{n}_1, \ldots, \widehat{n}_{\widehat{m}_j}$ denote their order, 
then $\sum_{j=1}^{n_{m+1} + 1} \sum_{k=1}^{\widehat{m}_j} \widehat{n}_k = \sum_{j=1}^m n_k$ 
and $\sum_{j=1}^{n_{m+1}+1} m_j = m$ so in particular $\widehat{m}_j \leq m$.  
Hence by the induction hypothesis $\C \setminus \widehat{g}^{-1}(\frac{l}{\omega_K (\Omega_j \cap K)})$ 
consists of $1 + \sum_{k=1}^{m_j} \widehat{n}_k $ bounded connected components. 
Therefore $g$ consists of 
$\sum_{j=1}^{n_{m+1}+1} \left( 1 + \sum_{k=1}^{m_j} n_k \right) = 1 + n_{m+1} + \sum_k^m n_k = 1 + \sum_{k=1}^{m+1} n_k$ bounded connected components which completes the induction.
\end{proof}

\section{Inverse renormalization}

In this section, we will construct an inverse procedure to the conformal renormalization scheme in the special case where the Green's function $g$ for a non-thin compact set $K$ has a unique simple critical point of maximal potential. More precisely, given two non-thin compact sets $K_1, K_2 \subseteq \C$, a choice of weight distribution $\Delta_1, \Delta_2$ with $\Delta_1 +\Delta_2 = 1$ and for $L$ sufficently large (Depending on $K_1$ and $K_2$), we will construct a compact set $K \subseteq \C$, such that $K_1$ and $K_2$ are conformal renormalizations of $K$. 

Our construction makes use of the notion of prime ends from Carathéodory Theory. For an introduction to prime ends see \cite{milnor}. For completeness we include the following basic facts.

Let $U$ be an open and simply connected subset of $\Chat$ conformally equivalent to the unit disc $\mathbb{D}$. A \emph{cross cut} in $U$ is defined as an arc $A: [0,1] \to \overline{U}$ which intersects the boundary $\partial U$ only at the distinct end-points. Any cross cut $A$ cuts $U$ into two connected components. Choose some base point $b_0 \in U$, then for any cross cut which is disjoint from $b_0$ we define the neighborhood $N(A)$ to be the component of $U \setminus A$ which does not contain $b_0$. 

We define a \emph{chain} to be an infinite sequence $A_1, A_2, A_3, ...$ of disjoint crosscuts with the property that the corresponding neighborhoods are nested, that is
$$
N(A_1) \supset N(A_2) \supset N(A_3) \supset ...
$$
and such that the spherical diameter of $A_n$ converges to $0$ as $n \to \infty$.

We say that two chains $\{ A_j \}_{j > 0}$ and $\{ A_j' \}_{j > 0}$ are equivalent if every $N(A_j)$ contains some $N(A_j')$ and every $N(A_j')$ contains some $N(A_j)$. An equivalence class of chains is called a \emph{prime end} $\mathcal{E}$ in $U$. We call the set of all prime ends in $U$ for the \emph{Caratheodory boundary} of $U$, and therefore a prime end is also called a point in the Carathéodory boundary of $U$.

We can compactify $U$ by the use of prime ends in $U$. The \emph{Carathéodory completion} $\hat{U}$ of $U$ is defined as the disjoint union of the set $U$ and the Caratheodory boundary of $U$, equipped with the following topology.

For any cross cut $A \subset \overline{U} \setminus {b_0}$ the neighborhood $\mathcal{N}_A$ is defined to be the neighborhood $N(A) \subset U$ together with the set consisting of all prime ends $\mathcal{E}$ for which some representative chain $\{A_j\}_{j>0}$ satisfies $A_1 = A$. These neighborhoods $\mathcal{N}_A$, together with the open subsets of $U$ form a basis for the Carathéodory topology.

By Caratheodorys theorem any Riemann map then extends to a homeomorphism from $\overline{\mathbb{D}}$ to $\hat{U}$ so the boundary of the Carathéodory completion is homeomorphic to $\partial \mathbb{D}$. 

Returning to the construction of the inverse renormalizaion: Take the disjoint union of two copies of the Riemann sphere, $\Chat_1 \sqcup \Chat_2$, and let  $K_1 \subset \Chat_1 \setminus \{ \infty_1 \}$ and $K_2 \subset \Chat_2 \setminus \{ \infty_2 \}$ be two full non-thin compact sets, and let $D_1 = \Chat_1 \setminus K_1$ and $D_2 = \Chat_2 \setminus K_2$ be their dual domains. Now for for $j \in \{1,2\}$, let $g_j$ denote the Green's function for $D_j$ and choose points $z_j \in D_j \setminus \{\infty\}$ such that $g_j(z_j) > \max \{v | v\in\sing(g_j)\}$.

Define $\Delta_1= \frac{g_2(z_2)}{g_1(z_1) + g_2(z_2)}$ and $\Delta_2= \frac{g_1(z_1)}{g_1(z_1) + g_2(z_2)}$ so that $\Delta_1 + \Delta_2 = 1$ and  $L:=\Delta_1 g_1(z_1) = \Delta_2 g_2(z_2)$.

Let $\mathcal{R}^j_t \subset D_j$ be the unique smooth external ray containing $z_j$, 
let $\mathcal{R}^j = \{z \in \mathcal{R}^j_t | g_j(z) > g_j(z_j) \} \cup \{ \infty \}$ 
and define $Y_j := \Chat_1 \setminus \mathcal{R}^j$. 
Now let $\whY_j$ denote the Carathodory completion of $Y_j$ and equip it with the Carathodory topology. We will denote the homeomorphic extension of the Riemann map by $\phi_j: \overline{\mathbb{D}} \to \whY_j$.

Since $\partial Y_j$ is an analytic arc from $z_j$ to $\infty$ then $z_j, \infty \in \partial Y_j$ corresponds to one point in the Carathéodory boundary, and every other boundary point, $z \in \partial Y_j \setminus \{z_0, \infty\}$, corresponds to two points in the Carathéodory boundary.

We define $\mathcal{R}^j_+$ to be the image of the clock-wise arc of $\mathbb{S} = \partial\mathbb{D}$ going from $\phi_j(z_0)$ to $\phi_j(\infty)$ (endpoints included) and $\mathcal{R}^j_-$ to be the image of the counter clock-wise arc of $\mathbb{S}$ going from $\phi_j(z_0)$ to $\phi_j(\infty)$ (endpoints included). We extend $g_j$ continously to $\whY_j$.

We can now glue $\partial \whY_1$ to $\partial \whY_2$ by matching scaled potentials. Let $\sim$ denote the smallest equivalence relation on $Y=\whY_1 \sqcup \whY_2$ where $x \sim y$ if 
\begin{itemize}
\item $x \in \mathcal{R}^1_+ , y \in \mathcal{R}^2_+$ and $\Delta_1 g_1(x)= \Delta_2 g_2(y)$, or
\item if $x \in \mathcal{R}^1_- , y \in \mathcal{R}^2_-$ and $\Delta_1 g_1(x)= \Delta_2 g_2(y)$.
\end{itemize}
Equip $Y$ with the disjoint union topology and equip $X:=Y/{\sim}$ with the quotient topology, so that the canonical projection $\pi:Y \to X$ is a quotient map. It is straight forward to show that $X$ is compact, simply connected and Hausdorff.

We will now equip $X$ with a complex structure making it conformally equivalent to the Riemann sphere.

\begin{lemma} \label{lemma_inverse_glue}
	There exists a unique complex structure on $X$ such that the restricted projection $\pi\vline_{\,Y_j}: Y_j \to X$ is an injective holomorphic mapping for $j \in \{1,2\}$.
\end{lemma}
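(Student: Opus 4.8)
The plan is to build an atlas on $X$ by hand, exactly in the style of the proofs of Lemma~\ref{conform_equivalent} and Lemma~\ref{gluing_ext_Ray_pairs}, and then to read off uniqueness from the removability of real-analytic arcs. Away from the seam there is nothing to do: since $\sim$ identifies no two points of $Y_j$, each restriction $\mapfromto{\pi}{Y_j}{X}$ is injective with open image (the image has preimage $Y_j$, open in $\whY_1\sqcup\whY_2$), so the inverses $(\pi|_{Y_j})^{-1}\colon \pi(Y_j)\to Y_j\subseteq\Chat_j$ are charts covering all of $X$ except the common image $S:=\pi(\partial\whY_1)=\pi(\partial\whY_2)$, which is the union of the two glued arcs $\pi(\mathcal{R}^1_+)=\pi(\mathcal{R}^2_+)$ and $\pi(\mathcal{R}^1_-)=\pi(\mathcal{R}^2_-)$ together with the two endpoints $\pi(z_1)=\pi(z_2)$ and $\pi(\infty_1)=\pi(\infty_2)$. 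Any complex structure with the asserted property must contain these charts, so the whole task is to cover the points of $S$.

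At an interior point $w=\pi(x)=\pi(y)$ of a glued arc, with $x\in\mathcal{R}^1_+$, $y\in\mathcal{R}^2_+$, $\De_1 g_1(x)=\De_2 g_2(y)$: near $x$ the slit $\mathcal{R}^1$ is a real-analytic arc (a field line of $g_1$), so on a small disk around $x$ in $\Chat_1$ the map $f_1=g_1+ih_1$, with $h_1$ a harmonic conjugate normalized to vanish on $\mathcal{R}^1_t$, is a biholomorphism flattening $\mathcal{R}^1$ to a horizontal segment; likewise $f_2$ near $y$. I would take as chart around $w$ the map equal to $\De_1 f_1\circ\pi|_{Y_1}^{-1}$ on the $Y_1$-side and to $\De_2 f_2\circ\pi|_{Y_2}^{-1}$ on the $Y_2$-side: along the seam both halves have imaginary part $0$ and real parts $\De_1 g_1=\De_2 g_2$, so they agree there, each is holomorphic on its side, and — the $\pm$-edges of $\mathcal{R}^1$ being glued to the $\pm$-edges of $\mathcal{R}^2$ — the two half-disks assemble into a homeomorphism onto a disk; holomorphic compatibility with the $(\pi|_{Y_j})^{-1}$ charts is immediate, and compatibility of the two halves across the real line is removability of a segment. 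The arc $\pi(\mathcal{R}^1_-)=\pi(\mathcal{R}^2_-)$ is identical. This is also exactly why the rescaling by $\De_j$ is forced: without it the two halves would not even be continuous across the seam.

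The two endpoint charts carry the actual content. At $\pi(\infty_j)$ I would use $\psi_j$ from Lemma~\ref{existence_and_uniqueness_of_psi}: near $\infty_j$ the set $Y_j$ is carried by $\psi_j$ onto a neighbourhood of $\infty$ slit along the ray of argument $2\pi t_j$, and a branch of $(\psi_j)^{\De_j}$ opens this slit into a sector of angular width $2\pi\De_j<2\pi$ on which $|(\psi_j)^{\De_j}|=e^{\De_j g_j}$ depends only on the $\De_j$-scaled potential; since $\De_1+\De_2=1$ the two sectors coming from $Y_1$ and $Y_2$ glue, after one constant rotation matching the scaled potentials on the seams, into a full punctured neighbourhood of $\infty$, giving a chart sending $\pi(\infty)$ to $\infty$. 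At the tip $p=\pi(z_1)=\pi(z_2)$ — note $z_j$ is \emph{not} a critical point of $g_j$, by the choice $g_j(z_j)>\max\{v:v\in\sing(g_j)\}$ — the set $Y_j$ near $z_j$ is, in the coordinate $w_j:=\De_j\bigl(f_j-f_j(z_j)\bigr)$, a disk punctured along $[0,\infty)$ (since $w_j=\De_j g_j-L$ on $\mathcal{R}^j$), and a branch of $\sqrt{w_j}$ opens it onto a half-disk; the branches are chosen, as dictated by the labelling of $\mathcal{R}^j_\pm$, so that the $Y_1$-half lands in the upper half-plane, the $Y_2$-half in the lower half-plane, and the two agree along the real segment, where $w_1=\De_1g_1-L=\De_2g_2-L=w_2$. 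Pinning down these branches and the rotation at $\infty$ so that the pieces genuinely tile and agree — where the clockwise/counter-clockwise convention defining $\mathcal{R}^j_\pm$ and the normalization $\De_1g_1(z_1)=\De_2g_2(z_2)=L$ get used — is the one delicate bookkeeping step, and I expect it to be the main obstacle; everything else is a transcription of the earlier gluing lemmas. (That $X$ is then conformally $\Chat$ follows from its being a compact simply connected Riemann surface.)

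For uniqueness: if $\mathcal{A}_1,\mathcal{A}_2$ are complex structures on $X$ for which each $\pi|_{Y_j}$ is injective holomorphic, they induce the same charts on the dense open set $\pi(Y_1)\cup\pi(Y_2)$; near any point of $S$ a transition map between an $\mathcal{A}_1$-chart and an $\mathcal{A}_2$-chart is a homeomorphism holomorphic off the real-analytic image of a glued arc, which is removable, hence holomorphic throughout. Thus $\mathcal{A}_1=\mathcal{A}_2$.
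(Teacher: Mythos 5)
Your proposal is correct and follows essentially the same route as the paper: an explicit atlas consisting of the inverse-projection charts off the seam, power-of-$\psi_j$/scaled-potential charts along the glued arcs and at $\infty$, and a square-root chart at the tip $\pi(z_1)=\pi(z_2)$, with uniqueness deduced from holomorphic removability of the glued arcs. The only differences are organizational --- the paper covers the whole seam interior together with $\pi(\infty)$ by the single chart $z\mapsto\left(\psi_j(\pi^{-1}(z))\right)^{-\Delta_j}$, whereas you use local charts $\Delta_j(g_j+ih_j)$ plus a separate chart at infinity --- and your insertion of the factors $\Delta_j$ before taking the square root at the tip is precisely the normalization that makes that chart match across the seam.
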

\begin{proof}
We can use the inverse projection restricted to $Y_j $ for $j=1,2$ as the first two charts. We can cover $\pi(\infty_1)=\pi(\infty_2)$ aswell as $\pi(\mathcal{R}^1_+ \cup \mathcal{R}^1_-) \setminus \pi(z_1) = \pi(\mathcal{R}^2_+ \cup \mathcal{R}^2_-) \setminus \pi(z_2)$ with $f: A \to B$
defined by $A=\{z | (z \in \whY_1 \land g_1(z) > g_1(z_1)) \lor (z \in \whY_2 \land g_2(z) > g_2(z_2)) \}$, $B=\{ z \in \mathbb{C} | |z| < \frac{1}{L} \}$ and 
$$
f(z) = \begin{cases} (\psi_1({\pi\vline_{\,\whY_1}}^{-1}(z)))^{-\Delta_1} \quad & \text{for } z \in \pi(\whY_1) \\
(\psi_2({\pi\vline_{\,\whY_2}}^{-1}(z)))^{-\Delta_2} \quad & \text{for } z \in \pi(\whY_2)
\end{cases}
$$
choosing a suitable branch such that $f$ is a well-defined homeomorphism.
It now remains to cover $\pi(z_1)=\pi(z_2)$.

Assume without loss of generalisation that $z_1 = 0$ and take some sufficently small simply connected open neighbourhood $O_1 \subset D_1$ of $z_0$ and let $h_1$ be the harmonic conjugate to $g_1$, normalised such that $h_1(z)=0$ on $\mathcal{R}^1$. Let $f_1(z)= g_1(z_0)-g_1(z) + ih_1(z)$ then since $f_1(O_1)$ is open it contains some disc $B_{\varepsilon_1}$ of radius $\varepsilon_1$ centered around $f_1(z_1)=0$. Choosing a suitable branch of the square root function we can map $B_{\varepsilon_1} \setminus f_1(\mathcal{R}^1)$ onto the half disc $\{z | |z| < \sqrt{\varepsilon_1} \land \mathrm{Re}(z) > 0 \}$. The set $B_{\varepsilon_1} \setminus f_1(\mathcal{R}^1)$ can be regarded as a subset of $\whY_1$ and evidently the mapping $(f_1(z))^{1/2}$ can be extended continuously to $\mathcal{R}^1_+$ and $\mathcal{R}^1_-$ mapping them onto $\{z | \mathrm{Re}(z) = 0 \land 0 \leq \Im (z) < \sqrt{\varepsilon_1} \}$ and $\{z | \mathrm{Re}(z) = 0 \land 0 \geq \Im (z) > - \sqrt{\varepsilon_1} \}$ respectively.

Similarly, assume without loss of generalisation that $z_2=0$ and take a sufficently small  neighbourhood $O_2 \subset D_2$, and define $f_2$ such that $f_2(O_2)$ contains some disc $B_{\varepsilon_2}$ of radius $\varepsilon_2$ centered around $f_2(w_0)=0$. Again choosing the other branch of the square root function we can map $B_{\varepsilon_2} \setminus f_2(\mathcal{R}^2)$ onto the other half disc $\{z | |z| < \sqrt{\varepsilon_2} \land \mathrm{Re}(z) < 0 \}$ including a continously extension of $\mathcal{R}^2_+$ and $\mathcal{R}^2_-$ onto $\{z | \mathrm{Re}(z) = 0 \land 0> \Im (z) \geq - \sqrt{\varepsilon_2} \}$ and $\{z | \mathrm{Re}(z) = 0 \land 0 \leq \Im (z) < \sqrt{\varepsilon_2} \}$ respectively.

Take $\varepsilon=\mathrm{min} \{\varepsilon_1, \varepsilon_2\}$ then the mapping $k: \pi(A_1 \cup A_2) \to B_\varepsilon$ defined by $A_1=f_1^{-1}(B_\varepsilon) \setminus \mathcal{R}^1 \cup \{z | (z \in \mathcal{R}^1_+ \lor z \in \mathcal{R}^1_-) \land |g_1(z) - g_1(z_0)| < \varepsilon \}$, $A_2=f_2^{-1}(B_\varepsilon) \setminus \mathcal{R}^2 \cup \{z | (z \in \mathcal{R}^2_+ \lor z \in \mathcal{R}^2_-) \land |g_2(z) - g_2(w_0)| < \varepsilon   \}$ and 
$$k(z)=\begin{cases} (f_1(\pi^{-1}(z)))^{1/2} \quad & \text{for } \pi(A_1)  \\
(f_2(\pi^{-1}(z)))^{1/2}  \quad & \text{for } \pi(A_2)
\end{cases} $$
identifies exactly those points which $\sim$ deems equivalent, so it is  a well-defined homeomorphism.

Clearly these charts make up a complex structure on $X$.

Let $R = \pi( \mathcal{R}^1_+ \cup \mathcal{R}^1_-)=\pi(\mathcal{R}^2_+ \cup \mathcal{R}^2_-)$ and suppose that $\mathcal{A}$ and $\mathcal{A}'$ are two complex structures on $X$, for which $\pi_j\;\vline_{\;Y_j}$ is an injective holomorphic mapping, then any chart on $X \setminus R$ would clearly have to be compatible.

Suppose that $\phi_1$ and $\phi_2$ are charts in $\mathcal{A}$ and $\mathcal{A}'$ respectively, both covering $w \in R$. Let $O_w$ be an open neighbourhood of $w$ covered by both charts, then their transition mapping $k:\phi_1(O_w) \to \phi_2(O_w)$ defined by $k = \phi_2 \circ \phi_1^{-1} $ is homeomorphism and it is holomorphic outside the analytic arc $\phi_2(R \cap O_w)$, which is holomorphically removeable. Therefore $k$ extends holomorphically to the entire neighbourhood and so the change of charts is holomorphic.
\end{proof}

It follows that for a sufficently small open neighbourhood $\om \subseteq X$ of $\pi(\infty_1) = \pi(\infty_2)$, 
the function  $\eta: \om \to \Chat$ defined by
$$
\eta(z) = \begin{cases} \psi_1(\pi^{-1}(z))^{\Delta_1} \quad & \text{for } z \in \pi(\whY_1) \cap\om \\
\psi_2(\pi^{-1}(z))^{\Delta_2} & \text{for } z \in \pi(\whY_2) \cap\om
\end{cases}
$$ 
is well-defined and holomorphic for a suitable branch choice. For normalization purposes it is convenient to choose the branches such that for $z \in \eta(\pi(\mathcal{R}^1_+))=\eta(\pi(\mathcal{R}^2_+))$ and $w \in \eta(\pi(\mathcal{R}^1_-))=\eta(\pi(\mathcal{R}^2_-))$ then $\Arg z < \Arg w$.
\begin{proof}[Proof of Theorem \ref{inv_ren_thm}.]
Let $t_1\in\T$ and $C,L, \De_1 > 0$ with $\De_1 < 1$ and non-thin centered compact sets $K_1, K_2$ of capacity $1$ with corresponding Green's functions $g_1$ and $g_2$ satisfying 
$L > \sing(g_1) \De_1$ and $L > \sing(g_2) (1-\De_1)$ be given.

Since $\frac{L}{\De_1} > \sing(g_1)$ and $\frac{L}{1-\De_2} > \sing(g_2)$ there exists $z_1 \in \mathcal{R}_0(K_1)$ and $z_2 \in \mathcal{R}_0(K_2)$ such that $g_1(z_1) = \frac{L}{\Delta_1}$ and  $g_2(z_2) = \frac{L}{(1-\Delta_1)}$.

Using $z_1 \in D_1$ and $z_2 \in D_2$ we shall construct $X$ as described above. It then follows from Lemma \ref{lemma_inverse_glue} that $X$ is a compact and simply connected Riemann surface, so by the Riemann uniformization theorem it is isomorphic to the Riemann sphere. Let $\Phi:X \to \Chat$ be the unique isomorphism satisfying 
\begin{align}
\Phi(z) = C {e^{2 \pi i t_1}}\eta(z) + O\left(\frac{1}{\eta(z)}\right) \quad\quad \text{for } z \to \infty \label{norm_inverse}
\end{align}
and let $\phi = \Phi \circ \pi$.

Then $K := \phi(K_1 \cup K_2)$ is a full compact, non-thin and centered subset of the complex plane with $c(K)=C$. Let $D$ be the dual domain for $K$, then since two external rays lands on $z_0:=\phi(z_1)=\phi(z_2) \in D$ then $z_0$ will be a critical point for $g_D$ with potential $g_D(z_0) = \Delta_1 g_1(z_1) = \Delta_2 g_2 (z_2)$. 

That the external rays landing at the critical point  $z_0$ has external arguments $t_1$ and $t_2 = t_1 + \Delta_1$ follows from the normalization in equation (\ref{norm_inverse}).

Let $(\Omega_1, \widehat{\phi_1}, \whK_1)$ and $(\Omega_2, \widehat{\phi_2}, \whK_2)$ be conformal renormalizations of $K$ constructed as in \textsection \ref{sec:conf_ren}.

Recall that $Y_j := \Chat \setminus \left(\{ z | z \in \mathcal{R}^j_0 | g_j(z) > g_j(z_j) \} \cup \{ \infty \} \right)$ for $j=1,2$. A moments consideration reveals that $\widehat{\phi}_j \circ \phi|_{Y_j}$ can be extended to a biholomorphic map from $\Chat$ to itself which sends $\infty$ to $\infty$, hence $\widehat{\phi}_j(\phi|_{Y_j}(z)) = az + b$. Since $c(\whK_j) = c(K_j)$ then 
$|a| = 1$, and since both $\whK_j$ and $K_j$ are centered then $b = 0$. 
Moreover $\mathcal{R}_0(K_j)$ is send to $\mathcal{R}_0(\whK_j)$ hence $\widehat{\phi}_j \circ \phi|_{Y_j}$ must be the identity.
\end{proof}

\end{document}